\documentclass[final]{amsart}
\usepackage{amssymb}
\usepackage{hhline}
\usepackage{graphicx}
\usepackage{stmaryrd}
\usepackage{float}
\usepackage{cite}

\usepackage[utf8]{inputenc}
\usepackage{listings}
\usepackage{xcolor}

\newtheorem{theorem}{Theorem}[section]

\newtheorem{proposition}[theorem]{Proposition}
\newtheorem{corollary}[theorem]{Corollary}

\newtheorem{problem}[theorem]{Problem}
\theoremstyle{definition}

\newtheorem{example}[theorem]{Example}

\newcommand{\Aut}{\mathrm{Aut\mkern 2mu}}

\title{Note on the number of doppelsemigroups of small order}

\author{Volodymyr M. Gavrylkiv}
\address[V.~Gavrylkiv]{Vasyl Stefanyk Carpathian National University, Ivano-Frankivsk, Ukraine} \email{vgavrylkiv@gmail.com}
\subjclass{18B40, 37L05, 22A15, 20D45, 20M15, 20B25}
\keywords{semigroup, doppelsemigroup, abelian doppelsemigroup, commutative doppelsemigroup, strong doppelsemigroup, rectangular doppelsemigroup}

\begin{document}

\begin{abstract}
We study doppelsemigroups, i.e., algebraic structures equip\-ped with two associative binary operations satisfying a specified system of axioms. We investigate duality and isomorphisms of doppelsemigroups and examine the relationships between commutative, abelian, strong, and rectangular doppelsemigroups. Several examples are constructed, including nontrivial iso-opposite doppelsemigroups, noncommutative iso-dual doppelsemigroups, nonabelian iso-cross-dual doppelsemigroups, and nonstrong rectangular iso-opposite doppelsemigroups. Furthermore, we refine the complete classification of nonisomorphic doppelsemigroups of order~3. Finally, we present computer-assisted calculations yielding the numbers of all pairwise nonisomorphic doppelsemigroups and strong doppelsemigroups of orders up to~$5$, as well as all pairwise nonisomorphic commutative, abelian, and rectangular doppelsemigroups of orders up to~$6$, obtained using \texttt{GAP}, \texttt{Python}, and \texttt{C++}.
\end{abstract}

\maketitle

\section*{Introduction}

Given a semigroup $(S, \dashv)$, consider a semigroup $(S, \vdash)$ defined on the same set. 
We say that the semigroups $(S, \vdash)$ and $(S, \dashv)$ are {\em interassociative} provided
$$(x\dashv y)\vdash z=x\dashv(y\vdash z)\ \text{  and  }\ (x\vdash y)\dashv z=x\vdash(y\dashv z)$$
for all $x, y, z\in S$.
When this occurs,  $(S, \vdash)$ is said to be an {\em interassociate} of  $(S, \dashv)$, or that the semigroups are interassociates of each other.
The present concept of interassociative semigroups originated in 1986 in Drouzy~\cite{Dr}, where it is noted that every group is isomorphic to each of its interassociates. J. B. Hickey in 1983~\cite{Hi83} dealt with the special case of interassociativity in which the operation $\vdash$ is defined by specifying $a\in S$ and stipulating that $x \vdash y = x\dashv a\dashv y$ for all $x, y\in S$. Clearly $(S, \vdash)$, which Hickey calls a {\em variant} of $(S, \dashv)$, is a semigroup that is an interassociate of $(S, \dashv)$. Methods of constructing interassociates were developed, for semigroups in general and for specific classes of semigroups, in 1997 by Boyd, Gould and Nelson~\cite{BoGN}. 

This paper is devoted to study of doppelsemigroups which are sets with two associative binary operations satisfying axioms
of interassociativity.
More accurately,
a {\em doppelsemigroup} is an algebraic structure $(D,\dashv,\vdash)$ consisting of a set $D$ equipped with two asso\-ciative binary operations $\dashv$ and $\vdash$ satisfying the following axioms:
\begin{align*}
(x \dashv y) \vdash z = x \dashv (y \vdash z), \hspace{10mm}(D_1) \\
(x \vdash y) \dashv z = x \vdash (y \dashv z). \hspace{10mm}(D_2)
\end{align*}
Thus, we can see that for any doppelsemigroup $(D,\dashv,\vdash)$, the semigroups $(D,\vdash)$ and $(D,\dashv)$ are interassociative, and conversely, if a semigroup $(D,\vdash)$ is an
interassociate of a semigroup $(D,\dashv)$, then $(D,\dashv,\vdash)$ and $(D,\vdash,\dashv)$ are doppelsemigroups. If $(D,\dashv,\vdash)$ is a doppelsemigroup, then rearranging the parentheses in an expression that contains only operations $\vdash$, $\dashv$ and elements of $D$ do not change the result.
A doppelsemigroup $(D,\dashv,\vdash)$ is called {\em commutative}~\cite{Zh2017AU} if both semigroups $(D,\dashv)$ and $(D,\vdash)$ are commutative. It is said to be {\em strong}~\cite{Zh2018} when the axiom $x\dashv (y\vdash z) = x\vdash (y\dashv z)$ holds. Moreover, $(D,\dashv,\vdash)$ is called {\em rectangular} if both $(D,\dashv)$ and $(D,\vdash)$ are rectangular semigroups in the sense that $x\dashv y \dashv z = x \dashv z$ and $x\vdash y \vdash z = x \vdash z$ for all $x,y,z \in D$.

The investigation of doppelsemigroups was initiated by A.~Zhuchok~\cite{Zh2017AU}. The concept of a doppelsemigroup originates from the study of dimonoids in the sense of Loday~\cite{L}. Linear analogues of doppelsemigroups, known as doppelalgebras, were introduced by Richter~\cite{R} in the context of algebraic $K$-theory, and commutative dimonoids provide natural examples of doppelsemigroups. Thus, the theory of doppelsemigroups is closely connected to both doppelalgebras and dimonoids. Moreover, a doppelsemigroup can be defined via the notion of a duplex~\cite{P}. Doppelsemigroups are also related to bisemigroups studied by Schein~\cite{Sh1}, which have applications in the theory of binary relations~\cite{Sh2}. Every semigroup $(D,\dashv)$ can be naturally regarded as a doppelsemigroup $(D,\dashv,\dashv)$, referred to as the \emph{trivial doppelsemigroup}. 

Several classes of doppelsemigroups have been explored extensively by A.~Zhuchok and his collaborators. Constructions include the free product of doppelsemigroups, the free (strong) doppelsemigroup, the free commutative (strong) doppelsemigroup, the free $n$-nilpotent (strong) doppelsemigroup, the free rectangular doppelsemigroup, and the free abelian doppelsemigroup~\cite{Zh2017AU, Zh2018, ZhK2018, ZhZhK2020}. Relatively free doppelsemigroups were investigated in~\cite{Zh2018M}, while the free $n$-dinilpotent (strong) doppelsemigroup was constructed in~\cite{ZhD2016, Zh2018}. In addition, A.~Zhuchok described the free left $n$-dinilpotent doppelsemigroup in~\cite{Zh2017}. Representations of ordered doppelsemigroups via binary relations were studied by Yu.~Zhuchok and J.~Koppitz~\cite{ZhK2019}.

In \cite{GR1}, all pairwise nonisomorphic doppelsemigroups of order $2$ were classified. There exist eight two-element doppelsemigroups, six of which are commutative, and all of which are strong. It was also found $75$ nonisomorphic three-element doppelsemigroups, among which $41$ are commutative and $65$ are strong.

Cyclic doppelsemigroups were investigated in \cite{GDS2}. A doppelsemigroup  $(G,\dashv,\vdash)$ is called a \emph{group doppelsemigroup} if $(G,\dashv)$ is a group, in which case $(G,\vdash)$ is isomorphic to $(G,\dashv)$. Such a doppelsemigroup is said to be \emph{cyclic} if $(G,\dashv)$ is a cyclic group. Up to isomorphism, there exist $\tau(n)$ finite cyclic (strong) doppelsemigroups of order $n$, where $\tau$ denotes the divisor-counting function, and there are infinitely many pairwise nonisomorphic infinite cyclic (strong) doppelsemigroups. Several functorial constructions and extensions of doppelsemigroups were developed in \cite{GUDS, GSDS, GLDS}.

In \cite{GDS_die}, rectangular doppelsemigroups were studied. In particular, com\-mutative rectangular doppelsemigroups were classified up to isomor\-phism, and combinatorial  results were obtained for the numbers of nonisomorphic rectangular  ideal extensions of left (right) zero doppelsemigroups by null quotients.

The present paper investigates duality and isomorphisms of doppelsemigroups, and examines the relationships between commutative, abelian, strong, and rectangular doppelsemigroups. Furthermore, we refine the complete classification of nonisomorphic doppelsemigroups of order~3. Finally, computer-assisted calculations provide the numbers of all pairwise nonisomorphic doppelsemigroups and strong doppelsemigroups of orders up to~5, as well as all pairwise nonisomorphic commutative, abelian, and rectangular doppelsemigroups of orders up to~6. Analogous results for dimonoids and $g$-dimonoids were obtained in \cite{Gdim1, Gdim2, Ggdim1, Gdim3}, where the automorphism groups of these algebraic structures were also explicitly determined.

\section{Preliminaries on semigroups}

An element $e$ of a semigroup $(S,*)$ is called  {\em a left identity} (resp.  {\em a right identity}) in $S$ if $e*a=a$ (resp. $a*e=a$) for any $a\in S$. An element $1$  is called  {\em an identity} if $1$ is a left identity and a right identity. An element $e$ of a semigroup $(S,*)$ is called  {\em a middle identity} in $S$ if $l*e*r=l*r$  for all $l,r\in S$. It is clear that each left (right) identity is a middle identity.

\smallskip

An element $z$ of a semigroup $S$ is called  {\em a left zero} (resp.  {\em a right zero}) in $S$ if $z*a=z$ (resp. $a*z=z$) for any $a\in S$. An element $0$  is called  {\em a  zero} if $0$ is a left zero and a right zero.

\smallskip

A semigroup $(S,*)$ is called a {\em null semigroup} if there exists an element $0\in S$ such that $x*y=0$ for all $x,y\in S$. In this case  $0$ is a zero of $S$.  By $O_{S^0}$ we denote a null semigroup with zero $0$ on a set $S$.  The null semigroups $O_{S^0}$ and $O_{T^z}$ are isomorphic if and only if $|S|=|T|$. If $S$ is a set of cardinality $|S|=n$, we use the notation $O_n$ for a representative of the class of semigroups isomorphic to $O_{S^0}$.

\smallskip

A semigroup $(S,*)$ is said to be a {\em left} (resp. {\em right}) {\em zero semigroup} if $a*b=a$ (resp. $a*b=b$) for any $a,b\in S$. By  $LO_S$ and $RO_S$ we denote  a left zero semigroup and a right zero semigroup on a set $S$, respectively.   If $S$ is a set of cardinality $|S|=n$,  we use the notations $LO_n$ and $RO_n$ for representatives of the classes of semigroups isomorphic to $LO_S$ and $RO_S$, respectively.

\smallskip

A semigroup $(S, *)$ is called \emph{rectangular}~\cite{ZhCA2017a} if every element of $S$ is a middle identity, that is,
$x*y*z = x*z$ for all  $x, y, z \in S$. In other words, the product in $S$ depends only on the first and the last factors. 
A nontrivial null semigroup is an  example of a commutative rectangular semigroup that has no identity element.
In~\cite{Nagy}, it was proved that a semigroup $(S,*)$ is rectangular if and only if the factor semigroup $S/\theta$ is a right zero semigroup, 
where $\theta = \{(a, b)\in S\times S : x*a = x*b \text{ for all } x\in S\}$. The authors also described a method for constructing all rectangular semigroups. In~\cite{GS_sie}, the study focused on rectangular ideal extensions of left (right) zero semigroups by null quotients, and provided new combinatorial results for the numbers of pairwise nonisomorphic instances.

\smallskip

If $(S,*)$ is a semigroup, then the semigroup $(S,{*}^d)$ with operation $x{*}^d y=y* x$ is called {\em dual} to $(S,*)$, denoted $(S,*)^d$.
It is clear that $(S,*)^d = (S,*)$ holds precisely when $(S,*)$ is a commutative semigroup, that $\Aut(S,*)^d = \Aut(S,*)$, and that the dual of a rectangular semigroup is itself rectangular.

\section{Duality properties of doppelsemigroups}

In this section, we introduce several notions of duality for doppelsemigroups and study the relationships among them.

If $(D, \dashv, \vdash)$ is a doppelsemigroup and we define
$$x\dashv^d y = y \dashv x,\ \ \  x\vdash^d y = y \vdash x,$$
then it is straightforward to verify that $(D,\vdash, \dashv)$, $(D,\dashv^d, \vdash^d)$, $(D,\vdash^d, \dashv^d)$ are also doppelsemigroups. We refer to them respectively as the {\em opposite}, {\em dual}, and {\em cross-dual} doppelsemigroups of $(D, \dashv, \vdash)$, denoted respectively by $(D, \dashv, \vdash)^{op}$, $(D, \dashv, \vdash)^{d}$, and $(D, \dashv, \vdash)^{cd}$.

It follows that the unary operations 
$$op: (D, \dashv, \vdash) \mapsto (D, \dashv, \vdash)^{op},\quad  d: (D, \dashv, \vdash) \mapsto (D, \dashv, \vdash)^{d},$$ 
$$\text{ and }\ cd: (D, \dashv, \vdash) \mapsto (D, \dashv, \vdash)^{cd}$$ 
are involutive in the sense that
$$((D, \dashv, \vdash)^{op})^{op} = (D, \dashv, \vdash),\quad ((D, \dashv, \vdash)^{d})^{d} = (D, \dashv, \vdash),$$
$$\text{ and }\ ((D, \dashv, \vdash)^{cd})^{cd} = (D, \dashv, \vdash).$$

A doppelsemigroup $(D, \dashv, \vdash)$ is said to be {\em self-opposite}, {\em self-dual}, and {\em self-cross-dual} if
$$(D, \dashv, \vdash)^{op} = (D, \dashv, \vdash),\quad (D, \dashv, \vdash)^{d} = (D, \dashv, \vdash),\quad \text{ and }\ (D, \dashv, \vdash)^{cd} = (D, \dashv, \vdash),$$ respectively.

\smallskip

Since $(D, \dashv, \vdash)$ is self-opposite if and only if its two operations coincide, we obtain the following proposition.

\begin{proposition}\label{ds_trivial}
Let $(D,\dashv, \vdash)$ be a doppelsemigroup. Then $(D,\dashv, \vdash)$ is self-opposite if and only if $(D,\dashv, \vdash)$ is trivial.
\end{proposition}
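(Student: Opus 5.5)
The argument is a direct unwinding of definitions, in two directions. The key observation is that self-opposite means equality of structures on the same set: $(D,\vdash,\dashv) = (D,\dashv,\vdash)$ says exactly that the first operation of each triple coincides, and likewise the second. So the plan is to show that this equality forces $\dashv=\vdash$. Conversely, if $\dashv=\vdash$ then the opposite is literally the same structure.

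First, suppose $(D,\dashv,\vdash)$ is self-opposite. Comparing first components of the equal triples $(D,\vdash,\dashv)$ and $(D,\dashv,\vdash)$ gives $\vdash\,=\,\dashv$ as maps $D\times D\to D$, i.e.\ $x\vdash y = x\dashv y$ for all $x,y\in D$. Hence the doppelsemigroup is $(D,\dashv,\dashv)$, which by the definition in the Introduction is the trivial doppelsemigroup associated with the semigroup $(D,\dashv)$.

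Second, suppose $(D,\dashv,\vdash)$ is trivial, so $(D,\dashv,\vdash)=(D,\ast,\ast)$ for some associative operation $\ast$. Then swapping the two operations returns $(D,\ast,\ast)$ again, so $(D,\dashv,\vdash)^{op}=(D,\dashv,\vdash)$ and the structure is self-opposite.

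There is no genuine obstacle. The only point needing care is interpretive: self-opposite must be read as literal equality of the two structures on the same set $D$, not merely as isomorphism. Under isomorphism the statement would fail, and the paper's later examples of nontrivial iso-opposite doppelsemigroups confirm that the two notions are meant to differ. With equality read this way, both directions follow immediately from comparing components.
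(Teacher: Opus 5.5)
Your proposal is correct and matches the paper's argument: the paper likewise notes that $(D,\dashv,\vdash)^{op}=(D,\vdash,\dashv)$ equals $(D,\dashv,\vdash)$ exactly when $\dashv=\vdash$, which is the definition of trivial. Your reading of self-opposite as literal equality rather than isomorphism is the intended one.
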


\begin{corollary} The class of nontrivial doppelsemigroups decomposes into pairs of opposite doppelsemigroups. 
\end{corollary}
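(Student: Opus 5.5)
The plan is to view $op$ as an involution acting on the class of all doppelsemigroups and to read off its orbits. The only inputs are the involutivity identity $((D,\dashv,\vdash)^{op})^{op}=(D,\dashv,\vdash)$ noted above and Proposition~\ref{ds_trivial}.

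First I will check that $op$ maps the class of nontrivial doppelsemigroups to itself. Suppose $(D,\dashv,\vdash)$ is nontrivial, so $\dashv\neq\vdash$. Its opposite $(D,\vdash,\dashv)$ has the same two operations in the other order, and these are still different, so the opposite is nontrivial as well. Another way to say this: if the opposite were trivial, then by Proposition~\ref{ds_trivial} it would be self-opposite. Applying $op$ and using involutivity would then show that $(D,\dashv,\vdash)$ is self-opposite, and hence trivial, which is a contradiction.

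Next I will check that a nontrivial doppelsemigroup is never paired with itself. By Proposition~\ref{ds_trivial}, a nontrivial $(D,\dashv,\vdash)$ is not self-opposite, so $(D,\dashv,\vdash)^{op}\neq(D,\dashv,\vdash)$. Together with involutivity, this means $op$ restricted to the nontrivial doppelsemigroups is a fixed-point-free involution.

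It follows that every orbit of $op$ on this class has exactly two elements, $\{(D,\dashv,\vdash),(D,\vdash,\dashv)\}$. These orbits partition the class, so the class decomposes into disjoint pairs of mutually opposite doppelsemigroups. I do not expect any real obstacle. The only point needing care is that the statement concerns equality of structures on the same set, not isomorphism; isomorphism questions, such as iso-opposite doppelsemigroups, are not needed here.
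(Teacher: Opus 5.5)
Your proposal is correct and follows the paper's reasoning: the paper derives this corollary directly from Proposition~\ref{ds_trivial} together with the involutivity of $op$, and you make that step explicit by showing $op$ is a fixed-point-free involution on the nontrivial doppelsemigroups, whose orbits are the opposite pairs. Your remark that the pairing is by equality rather than isomorphism is apt, because the paper's nontrivial iso-opposite examples show the statement would fail up to isomorphism.
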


Since  $(S,*)^d$ coincides with $(S,*)$ precisely when it is commutative, we obtain the following proposition.

\begin{proposition}\label{ds_com}
Let $(D,\dashv, \vdash)$ be a doppelsemigroup. Then $(D,\dashv, \vdash)$ is self-dual if and only if $(D,\dashv, \vdash)$ is commutative.
\end{proposition}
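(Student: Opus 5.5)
The plan is to unwind the definitions and reduce the claim to the semigroup fact recalled in the preliminaries: $(S,*)^d=(S,*)$ exactly when $(S,*)$ is commutative. By definition, $(D,\dashv,\vdash)$ is self-dual when $(D,\dashv^d,\vdash^d)=(D,\dashv,\vdash)$. Two doppelsemigroups on the same underlying set are equal precisely when their first operations coincide and their second operations coincide. So self-duality is equivalent to the conjunction of two conditions:
\[
\dashv^d=\dashv \quad\text{and}\quad \vdash^d=\vdash .
\]

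Next I apply the semigroup fact to each operation separately. The condition $\dashv^d=\dashv$ says $(D,\dashv)^d=(D,\dashv)$, which holds exactly when $(D,\dashv)$ is commutative. Likewise, $\vdash^d=\vdash$ holds exactly when $(D,\vdash)$ is commutative. Explicitly, $\dashv^d=\dashv$ means $y\dashv x=x\dashv y$ for all $x,y\in D$, which is commutativity by definition, and the same holds for $\vdash$.

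Combining these, $(D,\dashv,\vdash)$ is self-dual if and only if both $(D,\dashv)$ and $(D,\vdash)$ are commutative. By the definition in the introduction, this is exactly the statement that the doppelsemigroup $(D,\dashv,\vdash)$ is commutative.

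There is no real obstacle. The only point needing care is to treat equality of doppelsemigroups as componentwise equality of the ordered pair of operations on the fixed set $D$, as opposed to isomorphism. That reading matches the way self-opposite was handled in Proposition~\ref{ds_trivial}.
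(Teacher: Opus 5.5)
Your proof is correct and follows the same route as the paper, which derives the proposition directly from the fact that $(S,*)^d=(S,*)$ exactly when $(S,*)$ is commutative, applied to each of the two operations. Your explicit remark that self-duality means componentwise equality of the operations, not isomorphism, is the right reading and spells out what the paper leaves implicit.
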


\begin{corollary} The class of noncommutative doppelsemigroups decomposes into pairs of dual doppelsemigroups. 
\end{corollary}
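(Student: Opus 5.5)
The corollary follows from Proposition~\ref{ds_com} together with the fact that the operation $d$ is an involution. The plan is to define a pairing on the class of noncommutative doppelsemigroups by $(D,\dashv,\vdash)\mapsto(D,\dashv,\vdash)^d$. We then check three things: this map stays inside the class, it has no fixed points, and it is its own inverse. Together these say that the class splits into two-element orbits $\{(D,\dashv,\vdash),(D,\dashv,\vdash)^d\}$.

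\emph{Closure.} If $(D,\dashv,\vdash)$ is noncommutative, then $(D,\dashv,\vdash)^d$ is a doppelsemigroup, as observed at the start of this section. It is also noncommutative. Suppose, say, $x\dashv y\neq y\dashv x$. Then $x\dashv^d y = y\dashv x \neq x\dashv y = y\dashv^d x$, so $\dashv^d$ is noncommutative as well. The case where $\vdash$ is the noncommutative operation is handled the same way.

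\emph{No fixed points.} By Proposition~\ref{ds_com}, $(D,\dashv,\vdash)^d=(D,\dashv,\vdash)$ holds exactly when the doppelsemigroup is commutative. Hence for a noncommutative one, $(D,\dashv,\vdash)^d\neq(D,\dashv,\vdash)$, so each pair really has two distinct members.

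\emph{Disjointness.} Since $((D,\dashv,\vdash)^d)^d=(D,\dashv,\vdash)$, the relation ``is equal to or is the dual of'' is an equivalence relation, and each class is exactly $\{A,A^d\}$. Consequently distinct pairs are disjoint and every noncommutative doppelsemigroup lies in exactly one pair.

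None of these steps is a genuine obstacle. The only point that needs a moment's care is the closure step, that the dual of a noncommutative structure is again noncommutative. One could also get it from Proposition~\ref{ds_com} applied to $A^d$ together with involutivity: if $A^d$ were commutative, then $A^d=(A^d)^d=A$, which would make $A$ self-dual and hence commutative. The argument exactly parallels the corollary to Proposition~\ref{ds_trivial}.
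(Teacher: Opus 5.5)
Your proposal is correct and follows the paper's reasoning. The paper obtains this corollary directly from Proposition~\ref{ds_com} (self-dual if and only if commutative) together with the involutivity of $d$, and your explicit closure check, that the dual of a noncommutative doppelsemigroup is again noncommutative, is the fact the paper records separately in Proposition~\ref{ds_un_com}.
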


A doppelsemigroup $(D,\dashv, \vdash)$ is called {\em abelian} if $x \dashv y = y \vdash x$ for all $x,y\in D$.

\begin{proposition}\label{ds_ab}
Let $(D,\dashv, \vdash)$ be a doppelsemigroup. Then the following conditions are equivalent:
\begin{itemize}
\item[1)] $(D,\dashv)$ and $(D,\vdash)$ are dual semigroups;
\item[2)] $(D,\dashv, \vdash)$ is abelian;
\item[3)] $(D,\dashv, \vdash)$ is self-cross-dual.
\end{itemize}
\end{proposition}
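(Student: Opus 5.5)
The plan is to prove the cycle of equivalences by unwinding the definitions; each implication is a one-line identity check.

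For 1) $\Leftrightarrow$ 2), recall that $(D,\dashv)$ and $(D,\vdash)$ are dual semigroups exactly when $(D,\vdash)=(D,\dashv)^d$ on the same set, that is, when $x\vdash y = x\dashv^d y = y\dashv x$ for all $x,y\in D$. After renaming the variables, this is the defining identity $x\dashv y = y\vdash x$ of an abelian doppelsemigroup. So 1) and 2) are literally the same condition.

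For 2) $\Leftrightarrow$ 3), self-cross-duality means $(D,\vdash^d,\dashv^d)=(D,\dashv,\vdash)$. This is equivalent to the pair of identities $\vdash^d=\dashv$ and $\dashv^d=\vdash$, namely
\[
y\vdash x = x\dashv y \quad\text{and}\quad y\dashv x = x\vdash y \qquad \text{for all } x,y\in D .
\]
The first of these is exactly the abelian condition 2). The second follows from the first by swapping the roles of $x$ and $y$: applying $x\dashv y=y\vdash x$ to the pair $(y,x)$ gives $y\dashv x = x\vdash y$. Hence 2) implies both identities, so 2) implies 3). Conversely, 3) contains the first identity, so 3) implies 2).

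There is no genuine obstacle. The only point needing care is to observe that, in the definition of self-cross-dual, equality of the triples means equality of the two operations componentwise. The two resulting identities are then equivalent to each other, because $\dashv^d=\vdash$ holds if and only if $\dashv=\vdash^d$, since $d$ is involutive. Doppelsemigroup axioms $(D_1)$ and $(D_2)$ are not needed beyond the fact, noted earlier in the paper, that the dual and cross-dual structures are again doppelsemigroups.
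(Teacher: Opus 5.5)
Your proof is correct and takes essentially the same approach as the paper: both just unwind the definitions. The paper arranges the argument as the cycle $1)\Rightarrow 2)\Rightarrow 3)\Rightarrow 1)$, while you prove $1)\Leftrightarrow 2)$ and $2)\Leftrightarrow 3)$ directly, using the same componentwise identities $\vdash^d=\dashv$ and $\dashv^d=\vdash$.
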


\begin{proof} $(1)\Rightarrow(2)$ If $(D,\dashv)$ and $(D,\vdash)$ are dual semigroups, then $x\dashv y = y\vdash x $ for all $x,y\in D$, and hence $(D,\dashv, \vdash)$ is an abelian doppelsemigroup.

$(2)\Rightarrow(3)$ Let $(D,\dashv, \vdash)$ be an abelian doppelsemigroup. Taking into account that 
$$x \vdash^d y = y \vdash x = x \dashv y\ \ \text{  and  }\ \  x \dashv^d y = y \dashv x = x \vdash y,$$
we conclude that $\vdash^d\ =\ \dashv$ and $\dashv^d\ =\ \vdash$, and hence $(D,\dashv, \vdash)$ is a self-cross-dual doppelsemigroup.

$(3)\Rightarrow(1)$ If $(D,\dashv, \vdash)$ is a self-cross-dual doppelsemigroup, then $x \dashv y = x \vdash^d y = y \vdash x$ for all $x,y\in D$, and hence $(D,\dashv)$ and $(D,\vdash)$ are dual semigroups.
\end{proof}

\begin{corollary} The class of nonabelian doppelsemigroups decomposes into pairs of cross-dual doppelsemigroups. 
\end{corollary}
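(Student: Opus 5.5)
The corollary should follow from Proposition~\ref{ds_ab} together with the involutivity of the cross-dual operation $cd$. The plan is to show that $cd$ restricts to a fixed-point-free involution on the class of nonabelian doppelsemigroups. The pairs are then the orbits $\{D, D^{cd}\}$.

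The first step is to check that $cd$ maps the class of nonabelian doppelsemigroups into itself. For $D=(D,\dashv,\vdash)$ we have $D^{cd}=(D,\vdash^d,\dashv^d)$. This is abelian exactly when $x\vdash^d y = y\dashv^d x$ for all $x,y$, that is, when $y\vdash x = x\dashv y$ for all $x,y$. That is precisely the abelian condition for $D$. Hence $D$ is abelian if and only if $D^{cd}$ is abelian. Alternatively, one can note that $(D^{cd})^{cd}=D$, so $D$ is self-cross-dual if and only if $D^{cd}$ is, and then apply Proposition~\ref{ds_ab}.

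The second step uses the identity $((D,\dashv,\vdash)^{cd})^{cd}=(D,\dashv,\vdash)$ established earlier. It shows that $cd$ is an involution on this class, so its orbits have size one or two.

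The third step rules out orbits of size one. By Proposition~\ref{ds_ab}, $D^{cd}=D$ holds exactly when $D$ is abelian. So for a nonabelian $D$ we have $D^{cd}\neq D$, and every orbit consists of exactly two distinct doppelsemigroups, $D$ and $D^{cd}$. These orbits partition the class, which gives the claimed decomposition into pairs of cross-dual doppelsemigroups.

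There is no real obstacle here. The only point that needs care is the first step: verifying that taking the cross-dual preserves being nonabelian, so that the partner of each pair stays inside the class.
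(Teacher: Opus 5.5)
Your proposal is correct and follows the paper's reasoning. The paper states this corollary without proof, reading it off from Proposition~\ref{ds_ab} (abelian $\Leftrightarrow$ self-cross-dual) together with the involutivity of $cd$; your explicit check that $D^{cd}$ is abelian exactly when $D$ is supplies the detail the paper leaves unstated.
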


Since commutative semigroups $(D,\dashv)$ and $(D,\vdash)$ are dual if and only if their operations coincide, Proposition~\ref{ds_ab} implies the following corollaries.

\begin{corollary}\label{com_na_ds} Commutative nontrivial doppelsemigroups are nonabelian.
\end{corollary}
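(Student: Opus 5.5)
We argue by contraposition, using Proposition~\ref{ds_ab} to rewrite abelianness as duality of the two semigroups. Let $(D,\dashv,\vdash)$ be a commutative doppelsemigroup, and suppose it is abelian. We will show that it must then be trivial, which proves the statement.

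\textbf{Step 1.} Since $(D,\dashv,\vdash)$ is abelian, condition 2) of Proposition~\ref{ds_ab} holds. By the implication $(2)\Rightarrow(1)$ there, the semigroups $(D,\dashv)$ and $(D,\vdash)$ are dual, that is, $x\dashv y=y\vdash x$ for all $x,y\in D$.

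\textbf{Step 2.} By commutativity of $(D,\vdash)$ we have $y\vdash x=x\vdash y$. Hence $x\dashv y=x\vdash y$ for all $x,y\in D$, so the operations $\dashv$ and $\vdash$ coincide. Equivalently, since $(D,\vdash)$ is commutative, $(D,\vdash)^d=(D,\vdash)$; duality of $(D,\dashv)$ and $(D,\vdash)$ then forces $(D,\dashv)=(D,\vdash)$.

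\textbf{Step 3.} By Step 2, $(D,\dashv,\vdash)=(D,\dashv,\dashv)$, which is a trivial doppelsemigroup. By Proposition~\ref{ds_trivial} this is the same as being self-opposite. This contradicts the assumption that $D$ is nontrivial, so a commutative nontrivial doppelsemigroup cannot be abelian.

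There is no real obstacle here. The only point needing care is the direction of the substitution in Step 2: commutativity of only one of the operations, here $\vdash$, suffices, and applying it to $y\vdash x$ is what identifies the two operations. As a side remark, the same argument shows that a commutative doppelsemigroup is abelian if and only if it is trivial. The converse direction is immediate, since a trivial commutative doppelsemigroup satisfies $x\dashv y=y\dashv x=y\vdash x$.
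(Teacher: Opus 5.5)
Your proof is correct and is essentially the paper's argument. Abelianness means $(D,\dashv)$ and $(D,\vdash)$ are dual (Proposition~\ref{ds_ab}; in fact $x\dashv y=y\vdash x$ is just the definition), and commutative dual semigroups on the same set have coinciding operations, which forces the doppelsemigroup to be trivial.
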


\begin{corollary}\label{ab_ncom_ds} Abelian nontrivial doppelsemigroups are noncommutative.
\end{corollary}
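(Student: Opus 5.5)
We argue by contraposition, using the observation recorded just before Corollary~\ref{com_na_ds}: if $(D,\dashv)$ and $(D,\vdash)$ are both commutative, then they are dual precisely when their operations coincide. So suppose $(D,\dashv,\vdash)$ is abelian and nontrivial, and assume, aiming for a contradiction, that it is commutative.

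First, by Proposition~\ref{ds_ab}, being abelian is equivalent to $(D,\dashv)$ and $(D,\vdash)$ being dual semigroups. This means
$$x\dashv y = y\vdash x \quad \text{for all } x,y\in D.$$

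Second, commutativity gives $y\vdash x = x\vdash y$. Combining this with the displayed identity yields $x\dashv y = x\vdash y$ for all $x,y$. Hence $\dashv\ =\ \vdash$, so $(D,\dashv,\vdash)$ is trivial. Equivalently, by Proposition~\ref{ds_trivial}, it is self-opposite. This contradicts nontriviality, so the doppelsemigroup is noncommutative.

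The step deserving the most care is making sure the argument yields exactly the contrapositive of the stated implication, so that the corollary mirrors Corollary~\ref{com_na_ds}. Here we need commutativity of only one of the two operations, so a slightly stronger statement in fact holds: an abelian doppelsemigroup in which either $\dashv$ or $\vdash$ is commutative is trivial. Beyond this, the argument is a direct chaining of the two identities, and I expect no genuine obstacle.
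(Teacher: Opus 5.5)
Your proof is correct and is the paper's argument: being abelian means $(D,\dashv)$ and $(D,\vdash)$ are dual (Proposition~\ref{ds_ab}), and for commutative semigroups duality forces the two operations to coincide, so an abelian commutative doppelsemigroup is trivial. Your remark that commutativity of just one operation already suffices is also correct; it shows that in an abelian nontrivial doppelsemigroup neither operation is commutative.
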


\section{Connections between strong and commutative doppelsemigroups}

\begin{proposition}\label{ds_un_st}
Let $(D,\dashv, \vdash)$ be a doppelsemigroup. Then the following conditions are equivalent:
\begin{itemize}
\item[1)] $(D,\dashv, \vdash)$ is strong;
\item[2)] $(D,\dashv, \vdash)^{op}$ is strong;
\item[3)] $(D,\dashv, \vdash)^d$ is strong;
\item[4)] $(D,\dashv, \vdash)^{cd}$ is strong.
\end{itemize}
\end{proposition}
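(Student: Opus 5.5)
The plan is to show 1)$\Leftrightarrow$2), 1)$\Leftrightarrow$3), and then derive 1)$\Leftrightarrow$4) from these. The last step uses the fact that the cross-dual is the dual of the opposite: $(D,\dashv,\vdash)^{cd}=\bigl((D,\dashv,\vdash)^{op}\bigr)^{d}$, since both have operations $\vdash^d,\dashv^d$ in that order. Throughout, we use that the strong axiom $x\dashv(y\vdash z)=x\vdash(y\dashv z)$, combined with $(D_1)$ and $(D_2)$ and the free rearrangement of parentheses, says that the four expressions
$$(x\dashv y)\vdash z,\quad x\dashv(y\vdash z),\quad x\vdash(y\dashv z),\quad (x\vdash y)\dashv z$$
all coincide.

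For 1)$\Rightarrow$2), the opposite doppelsemigroup $(D,\vdash,\dashv)$ is strong exactly when $x\vdash(y\dashv z)=x\dashv(y\vdash z)$ for all $x,y,z$. This is the original strong axiom read backwards. So the two conditions are literally the same identity, and 1)$\Leftrightarrow$2) is immediate.

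For 1)$\Rightarrow$3), we must verify $x\dashv^d(y\vdash^d z)=x\vdash^d(y\dashv^d z)$. Unwinding the definitions, the left side is $(z\vdash y)\dashv x$ and the right side is $(z\dashv y)\vdash x$. By $(D_2)$ the left side equals $z\vdash(y\dashv x)$, and by $(D_1)$ the right side equals $z\dashv(y\vdash x)$. These two agree by the strong axiom applied to the triple $(z,y,x)$. The converse follows by applying this implication to the dual doppelsemigroup, since $d$ is involutive. Then 1)$\Leftrightarrow$4) follows by chaining 1)$\Leftrightarrow$2) with the dual equivalence applied to the opposite.

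The only step that needs care is the bookkeeping in the dual case. The reversal of arguments turns the strong identity into one bracketed on the left, so one has to invoke $(D_1)$ and $(D_2)$ to move it back into the form of the strong axiom. Everything else is formal.
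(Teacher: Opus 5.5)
Your proof is correct and is essentially the paper's argument: the key step is the same computation $(z\vdash y)\dashv x = z\vdash(y\dashv x) = z\dashv(y\vdash x) = (z\dashv y)\vdash x$, using $(D_2)$, the strong axiom, and $(D_1)$. The only difference is bookkeeping: the paper runs the cycle $1)\Rightarrow 2)\Rightarrow 3)\Rightarrow 4)\Rightarrow 1)$ with a second explicit computation for $4)\Rightarrow 1)$, whereas you obtain the remaining implications from the involutivity of $d$ and the identity $(D,\dashv,\vdash)^{cd}=((D,\dashv,\vdash)^{op})^{d}$.
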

\begin{proof} The implication $(1)\Rightarrow(2)$ is trivial. 

$(2)\Rightarrow(3)$ Let $(D,\dashv, \vdash)^{op}$ be a strong doppelsemigroup, that is $$x\vdash(y\dashv z)=x\dashv (y\vdash z)\ \text{ for all } x,y,z\in D.$$ Taking into account that
\[
\begin{aligned}
x\dashv^d (y\vdash^d z) &= x\dashv^d (z\vdash y) = (z\vdash y)\dashv x = z\vdash (y\dashv x)  \\
&=  z\dashv (y\vdash x)=  (z\dashv y)\vdash x =  (y\dashv^d z)\vdash x = x\vdash^d(y\dashv^d z),
\end{aligned}
\]
for all $x,y,z\in D$, we conclude that $(D,\dashv, \vdash)^d = (D,\dashv^d, \vdash^d)$ is a strong doppelsemigroup as well.

Since $(D, \dashv, \vdash)^{cd}$ is the opposite of $(D, \dashv, \vdash)^d$, the implication $(3) \Rightarrow (4)$ follows directly from $(1) \Rightarrow (2)$.

$(4)\Rightarrow(1)$ Let $(D,\dashv, \vdash)^{cd} = (D,\vdash^d, \dashv^d)$ be a strong doppelsemigroup. Then $x\vdash^d (y\dashv^d z)= x\dashv^d(y\vdash^d z)$ for all $x,y,z\in D$.
It follows that
\[
\begin{aligned}
x \dashv (y \vdash z) &= x \dashv (z \vdash^d y) = (z \vdash^d y) \dashv^d x = z \vdash^d (y \dashv^d x)  \\
&= z \dashv^d (y \vdash^d x)= (z \dashv^d y) \vdash^d x = (y \dashv z) \vdash^d x = x \vdash (y \dashv z),
\end{aligned}
\]
for all $x,y,z\in D$, and thus $(D,\dashv, \vdash)$ is a strong doppelsemigroup as well.
\end{proof}

The following proposition shows that the class of strong doppelsemigroups contains the class of commutative doppelsemigroups.

\begin{proposition}\label{com_strong} Each commutative doppelsemigroup $(D,\dashv,\vdash)$ is strong.
\end{proposition}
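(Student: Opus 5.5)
We must show that $x\dashv(y\vdash z)=x\vdash(y\dashv z)$ for all $x,y,z\in D$ whenever both $(D,\dashv)$ and $(D,\vdash)$ are commutative. The plan is a direct chain of equalities. We start from $x\dashv(y\vdash z)$ and use only commutativity of each operation together with the axioms $(D_1)$ and $(D_2)$ to move parentheses, until we reach $x\vdash(y\dashv z)$.

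Concretely, I would proceed as follows. By commutativity of $\vdash$ we have $y\vdash z=z\vdash y$, so $x\dashv(y\vdash z)=x\dashv(z\vdash y)$. Axiom $(D_1)$ turns this into $(x\dashv z)\vdash y$. Commutativity of $\vdash$ then gives $y\vdash(x\dashv z)$. Axiom $(D_2)$, read from right to left, gives $(y\vdash x)\dashv z$. Commutativity of $\vdash$ inside the bracket yields $(x\vdash y)\dashv z$. Finally, $(D_2)$ again gives $x\vdash(y\dashv z)$. In display form:
\[
\begin{aligned}
x\dashv(y\vdash z)&=x\dashv(z\vdash y)=(x\dashv z)\vdash y=y\vdash(x\dashv z)\\
&=(y\vdash x)\dashv z=(x\vdash y)\dashv z=x\vdash(y\dashv z).
\end{aligned}
\]

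This chain uses only commutativity of $\vdash$. Alternatively, one could route through commutativity of $\dashv$, or cite Proposition~\ref{ds_un_st}, but neither is needed. The only real obstacle is finding the right order of rewrites so that each step matches the exact form of $(D_1)$ or $(D_2)$. I would check each of the six equalities individually against the axioms.
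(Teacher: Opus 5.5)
Your proof is correct. Each of the six equalities is either commutativity of $\vdash$ or an instance of $(D_1)$ or $(D_2)$ under exactly the substitutions you describe.

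The paper also uses a six-step rewrite chain, but a different one:
$x\dashv(y\vdash z)=(y\vdash z)\dashv x=y\vdash(z\dashv x)=(z\dashv x)\vdash y=z\dashv(x\vdash y)=(x\vdash y)\dashv z=x\vdash(y\dashv z)$.
This chain invokes commutativity of $\dashv$ twice and of $\vdash$ once, so it genuinely needs both operations to be commutative.

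Your chain only ever swaps factors of $\vdash$-products. It therefore proves the sharper statement that a doppelsemigroup is strong as soon as $(D,\vdash)$ alone is commutative. The mirror chain
$x\vdash(y\dashv z)=x\vdash(z\dashv y)=(x\vdash z)\dashv y=y\dashv(x\vdash z)=(y\dashv x)\vdash z=(x\dashv y)\vdash z=x\dashv(y\vdash z)$
shows that commutativity of $(D,\dashv)$ alone also suffices. Equivalently, apply your argument to the opposite doppelsemigroup, whose strongness condition is the same identity.

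The strengthening is not vacuous. Let $\vdash$ be the null operation with zero $0$, and let $\dashv$ be a noncommutative semigroup operation having $0$ as a zero (e.g.\ a two-element left zero semigroup with $0$ adjoined). Then $(D,\dashv,\vdash)$ is a noncommutative doppelsemigroup. It falls under your version of the statement but not under the paper's.

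The paper's chain gains only a symmetric-looking use of the full hypothesis. Yours shows that one commutative operation is enough.
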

\begin{proof}Indeed, for all $x,y,z\in D$,
\[
\begin{aligned}
x\dashv(y\vdash z) & = (y\vdash z)\dashv x = y\vdash (z\dashv x) = (z\dashv x)\vdash y \\
& = z\dashv (x\vdash y) = (x\vdash y)\dashv z = x\vdash (y\dashv z),
\end{aligned}
\]
and thus $(D,\dashv,\vdash)$ is a strong doppelsemigroup.
\end{proof}

Since commutativity of the operation $\dashv$ is equivalent to the commutativity of the operation $\dashv^d$, and commutativity of the operation $\vdash$ is equivalent to the commutativity of the operation $\vdash^d$, we obtain the following proposition.

\begin{proposition}\label{ds_un_com}
Let $(D,\dashv, \vdash)$ be a doppelsemigroup. Then the following conditions are equivalent:  
\begin{itemize}
\item[1)] $(D,\dashv, \vdash)$ is commutative;
\item[2)] $(D,\dashv, \vdash)^{op}$ is commutative;
\item[3)] $(D,\dashv, \vdash)^d$ is commutative;
\item[4)] $(D,\dashv, \vdash)^{cd}$ is commutative.
\end{itemize}
\end{proposition}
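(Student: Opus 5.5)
We prove the equivalences by looking at the two operations separately. Commutativity of a doppelsemigroup is, by definition, a condition on each of its two semigroups, and each of the three constructions just permutes or dualizes those two semigroups. So the plan is to record one elementary fact about a single binary operation and then read off each equivalence from the definitions of $(D,\dashv,\vdash)^{op}$, $(D,\dashv,\vdash)^d$ and $(D,\dashv,\vdash)^{cd}$.

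The fact is this: for any binary operation $*$ on $D$, the operation $*$ is commutative if and only if $*^d$ is. Indeed, $x*^d y=y*x$, so $x*^dy=y*^dx$ holds exactly when $y*x=x*y$. In fact, if $*$ is commutative then $*^d=*$.

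\emph{$(1)\Leftrightarrow(2)$.} The opposite $(D,\vdash,\dashv)$ has the same two operations, merely listed in the other order. Since commutativity of a doppelsemigroup asks for both operations to be commutative, the two conditions are literally the same.

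\emph{$(1)\Leftrightarrow(3)$.} The operations of $(D,\dashv^d,\vdash^d)$ are $\dashv^d$ and $\vdash^d$. By the fact above, $\dashv^d$ is commutative if and only if $\dashv$ is, and $\vdash^d$ is commutative if and only if $\vdash$ is.

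\emph{$(1)\Leftrightarrow(4)$.} The cross-dual $(D,\vdash^d,\dashv^d)$ is the opposite of the dual, so it has the same pair of operations as the dual. Hence this case reduces to $(1)\Leftrightarrow(3)$ combined with the argument of $(1)\Leftrightarrow(2)$.

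Alternatively, one could cite Proposition~\ref{ds_com}: a commutative doppelsemigroup equals its own dual, so its cross-dual coincides with its opposite. However, the converse directions still need the operation-wise argument above, so the direct approach is the cleaner route.

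There is no genuine obstacle here. The only point requiring care is the bookkeeping: each construction just rearranges or dualizes the same two operations, and commutativity is insensitive to both changes.
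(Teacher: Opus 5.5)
Your proof is correct and follows essentially the same route as the paper, which justifies the proposition by noting that $\dashv$ (resp.\ $\vdash$) is commutative if and only if $\dashv^d$ (resp.\ $\vdash^d$) is. Your explicit bookkeeping for the opposite and cross-dual cases fills in details the paper leaves implicit.
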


\section{Connections between rectangular and strong doppelsemigroups}

Observing that a semigroup $(S,*)$ is rectangular if and only if its dual is rectangular, we arrive at the following proposition.

\begin{proposition}\label{ds_un_rec}
Let $(D,\dashv, \vdash)$ be a doppelsemigroup. Then the following conditions are equivalent:
\begin{itemize}
\item[1)] $(D,\dashv, \vdash)$ is rectangular;
\item[2)] $(D,\dashv, \vdash)^{op}$ is rectangular;
\item[3)] $(D,\dashv, \vdash)^d$ is rectangular;
\item[4)] $(D,\dashv, \vdash)^{cd}$ is rectangular.
\end{itemize}
\end{proposition}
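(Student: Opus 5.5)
The plan is to reduce everything to the corresponding fact about single semigroups: a semigroup $(S,*)$ is rectangular if and only if its dual $(S,*)^d$ is rectangular. First I will verify this fact directly. Suppose $x*y*z=x*z$ for all $x,y,z\in S$. Then
$$x*^d y*^d z = z*y*x = z*x = x*^d z$$
for all $x,y,z$, so $(S,*^d)$ is rectangular. Since $((S,*)^d)^d=(S,*)$, the converse follows by applying the same computation to the dual.

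Next I use that rectangularity of a doppelsemigroup $(D,\dashv,\vdash)$ is, by definition, a condition on the unordered pair of semigroups $\{(D,\dashv),(D,\vdash)\}$: both must be rectangular. Each of the three constructions changes this pair only in a controlled way:
\begin{itemize}
\item passing to $(D,\dashv,\vdash)^{op}$ merely swaps the order of the two semigroups;
\item passing to $(D,\dashv,\vdash)^{d}$ replaces each semigroup by its dual;
\item passing to $(D,\dashv,\vdash)^{cd}$ does both at once.
\end{itemize}

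I will then argue as follows. The equivalence $1)\Leftrightarrow 2)$ is immediate, since the set of operations does not change. For $1)\Rightarrow 3)$, if $\dashv$ and $\vdash$ are rectangular, then by the semigroup fact so are $\dashv^d$ and $\vdash^d$. The converse $3)\Rightarrow 1)$ follows by applying the same argument to the dual, using that $d$ is an involution. Finally, $3)\Leftrightarrow 4)$ follows from $1)\Leftrightarrow 2)$ applied to $(D,\dashv,\vdash)^d$, because $(D,\dashv,\vdash)^{cd}$ is the opposite of $(D,\dashv,\vdash)^d$, exactly as in the proof of Proposition~\ref{ds_un_st}.

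There is no genuine obstacle here. The only point requiring care is the one-line reversal of the triple product in the semigroup fact, checking that $z*y*x=z*x$ indeed translates back to $x*^d z$. This mirrors Proposition~\ref{ds_un_com}.
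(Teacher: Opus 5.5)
Your proposal is correct and follows the paper's approach. The paper derives the proposition directly from the observation that a semigroup is rectangular if and only if its dual is, which you verify explicitly via $x*^d y*^d z = z*y*x = z*x = x*^d z$, and the rest follows because $op$, $d$, and $cd$ only swap the two operations and/or replace each by its dual.
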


\smallskip

\begin{proposition}\label{strong_rect}
A rectangular doppelsemigroup is strong if and only if it is trivial.
\end{proposition}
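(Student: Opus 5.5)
The direction ``trivial $\Rightarrow$ strong'' is immediate. If $\dashv\ =\ \vdash$, the strong axiom $x\dashv(y\vdash z)=x\vdash(y\dashv z)$ reads $x\dashv(y\dashv z)=x\dashv(y\dashv z)$, which holds. Moreover, a semigroup $(D,\dashv)$ is rectangular exactly when $(D,\dashv,\dashv)$ is, so nothing more is needed here.

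For the converse, let $(D,\dashv,\vdash)$ be rectangular and strong. The plan is to prove $x\dashv z=x\vdash z$ for all $x,z\in D$. The idea is to apply rectangularity of one operation with a middle factor that is a product in the \emph{other} operation, and then use the strong axiom to swap which operation sits next to $x$. Throughout I use the remark from the Introduction that in a doppelsemigroup parentheses may be rearranged freely; this follows from associativity together with $(D_1)$ and $(D_2)$.

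First I fix arbitrary $y,w\in D$ and apply rectangularity of $\dashv$ with middle factor $y\vdash w$. This gives $x\dashv z=x\dashv(y\vdash w)\dashv z$. By the strong axiom, $x\dashv(y\vdash w)=x\vdash(y\dashv w)$, so
\[
x\dashv z = x\vdash (y\dashv w)\dashv z = x\vdash (y\dashv w\dashv z) = x\vdash(y\dashv z),
\]
where the last step is rectangularity of $\dashv$ again. Symmetrically, rectangularity of $\vdash$ with middle factor $y\dashv w$, combined with the strong axiom $x\vdash(y\dashv w)=x\dashv(y\vdash w)$, gives
\[
x\vdash z = x\dashv (y\vdash w)\vdash z = x\dashv (y\vdash w\vdash z) = x\dashv(y\vdash z).
\]
Applying the strong axiom once more, $x\dashv(y\vdash z)=x\vdash(y\dashv z)$, so the two displays combine to give $x\vdash z=x\dashv z$. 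Since $x$ and $z$ were arbitrary, the operations coincide and the doppelsemigroup is trivial.

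The only delicate point is bookkeeping of the reassociations, for instance turning $(x\vdash(y\dashv w))\dashv z$ into $x\vdash((y\dashv w)\dashv z)$. These are precisely instances of $(D_2)$ (respectively $(D_1)$) together with associativity of each operation, so no genuine obstacle is expected. The real content lies in choosing the mixed middle factor so that the strong axiom can be applied.
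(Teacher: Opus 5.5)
Your proof is correct, and it takes a genuinely different route from the paper. The paper argues structurally. It invokes Nagy's representation of a rectangular semigroup as a disjoint union of blocks indexed by a right zero semigroup, with transition maps $f_{t,r}$ for $\dashv$ and $g_{u,v}$ for $\vdash$. It rewrites the strong axiom as $f_{t,r'}(x)=g_{u,v'}(x)$ and specializes to $y=z$. It then lets $y$ vary so that the indices sweep out all of $T$ and $U$, which gives $f_{t,r}(x)=g_{u,v}(x)$ and hence $x\dashv y=x\vdash y$.

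You stay entirely at the level of identities. You insert a middle factor built with the \emph{other} operation and use $(D_1)$, $(D_2)$ and the strong axiom. This yields $x\dashv z=x\vdash(y\dashv z)$ and $x\vdash z=x\dashv(y\vdash z)$, which the strong axiom then equates.

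Your version is shorter and needs no representation theorem. It is also clearer at the one point where the structural argument is thinnest. From $x\dashv(y\vdash y)=x\vdash(y\dashv y)$ the paper still needs the block of the middle factor to be irrelevant, i.e.\ $x\dashv(y\vdash y)=x\dashv y$. It asserts this by letting $r''$ and $v''$ range independently over $T$ and $U$, although both are determined by the single element $y$. Your first identity with $y=z$, combined with the strong axiom, gives $x\dashv(z\vdash z)=x\dashv z$ directly.

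What the structural proof offers in return is a description of the situation in terms of the block maps, tying the proposition to the structure theory of rectangular semigroups. As a proof of this particular statement, however, your equational derivation is the cleaner one.
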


\begin{proof}
Let $(D,\dashv,\vdash)$ be a rectangular doppelsemigroup.  
By the representation theorem for rectangular semigroups~\cite{Nagy}, the semigroup $(D,\dashv)$ decomposes as $D=\bigsqcup_{t\in T} S_t$, where $T$ is a right zero semigroup, and is equipped with maps $f_{t,r}:S_t\to S_r$ ($t,r\in T$) satisfying $f_{r,p}\circ f_{t,r}=f_{t,p}$; the product is given by $a\dashv b=f_{t,r}(a)$ for $a\in S_t$, $b\in S_r$.  
Similarly, $(D,\vdash)$ decomposes as $D=\bigsqcup_{u\in U} R_u$, with $U$ a right zero semigroup, maps $g_{u,v}:R_u\to R_v$ obeying $g_{v,w}\circ g_{u,v}=g_{u,w}$, and product $a\vdash b=g_{u,v}(a)$ for $a\in R_u$, $b\in R_v$.

Assume first that $(D,\dashv,\vdash)$ is strong, i.e., for all  $x,y,z\in D$,
\[
x\dashv (y\vdash z)=x\vdash (y\dashv z).
\]
Fix any $x\in D$. By the definition of a partition, there exist unique indices $t\in T$ and $u\in U$ such that $x\in S_t$ and $x\in R_u$.
For arbitrary $y,z\in D$ with $y\in S_r\cap R_v$ and $z\in S_p\cap R_w$, the rectangular representations yield
\[
x\dashv (y\vdash z)=x\dashv g_{v,w}(y)=f_{t,r'}(x), 
\qquad 
x\vdash (y\dashv z)=x\vdash f_{r,p}(y)=g_{u,v'}(x),
\]
for some indices $r'\in T$ and $v'\in U$ determined by the blocks of $y$ and $z$.  
Thus the strong condition implies
\[
f_{t,r'}(x)=g_{u,v'}(x).
\]

Taking $y=z$, we obtain, for all  $x,y\in D$,
\[
x\dashv (y\vdash y)=x\vdash (y\dashv y),
\]
which unfolds as
\[
f_{t,r''}(x)=g_{u,v''}(x),
\]
where $r''\in T$ and $v''\in U$ now depend only on the block of $y$.  
Since $\{S_r:r\in T\}$ and $\{R_v:v\in U\}$ are partitions of $D$, varying $y$ forces $r''$ to range over all of $T$ and $v''$ over all of $U$.  
Hence the above equality implies
\[
f_{t,r}(x)=g_{u,v}(x) 
\qquad\text{for all } t,r\in T,\ u,v\in U,\ \text{and all } x\in D.
\]

But the latter means precisely that the two operations coincide on every pair of elements. Indeed, for $x\in S_t\cap R_u$ and $y\in S_r\cap R_v$,
\[
x\dashv y = f_{t,r}(x)=g_{u,v}(x)=x\vdash y.
\]
Therefore, $(D,\dashv,\vdash)$ is trivial.

Conversely, if the doppelsemigroup is trivial, i.e.\ $x\dashv y=x\vdash y$ for all $x,y\in D$, then
\[
x\dashv (y\vdash z)=x\vdash (y\vdash z)=x\vdash (y\dashv z),
\]
so it is clearly strong.

Consequently, a rectangular doppelsemigroup is strong if and only if it is trivial.
\end{proof}

\begin{corollary}\label{rec_nstr_ds} Rectangular nontrivial doppelsemigroups are nonstrong.
\end{corollary}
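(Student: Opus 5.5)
This is the contrapositive of Proposition~\ref{strong_rect}, so the plan is to invoke that proposition directly. Let $(D,\dashv,\vdash)$ be a rectangular nontrivial doppelsemigroup and suppose, for contradiction, that it is strong. The direction ``strong $\Rightarrow$ trivial'' of Proposition~\ref{strong_rect} then gives $\dashv=\vdash$, contradicting nontriviality. Hence $(D,\dashv,\vdash)$ is nonstrong.

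Since the corollary depends entirely on the forward implication of Proposition~\ref{strong_rect}, I would also want an elementary check of it that avoids the block decomposition. The key step is to set $y=x$ and $z=x$ in the strong axiom and use rectangularity of each operation. This gives
\[
x\dashv(x\vdash z)=x\vdash(x\dashv z).
\]
Next, expand both sides via the interassociativity axioms $(D_1)$ and $(D_2)$, and collapse the middle factors using $a\dashv b\dashv c=a\dashv c$ and $a\vdash b\vdash c=a\vdash c$. I would aim to reduce the left side to $x\dashv z$ and the right side to $x\vdash z$.

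The main obstacle is this reduction, because mixed words like $x\dashv x\vdash z$ do not collapse immediately by rectangularity of a single operation. To handle it, I would multiply on suitable sides, for example forming $x\dashv(x\dashv x\vdash z)$, and use $(D_1)$/$(D_2)$ to move operations until a pure $\dashv$-word or pure $\vdash$-word appears. If this gets messy, I would simply rely on Proposition~\ref{strong_rect} as stated, which is permitted, and then the corollary is immediate.
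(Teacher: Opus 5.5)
Your proof is correct and matches the paper exactly: the corollary is the contrapositive of the forward implication of Proposition~\ref{strong_rect}. The optional elementary check is not needed, and its intended reduction $x\dashv(x\vdash z)=x\dashv z$ is false in general: read with the operations of the dual $(R,\dashv,\vdash)^d$ of the order-$3$ example in Section~\ref{sec:3iso_clas}, it fails at $z=c$. So relying on the proposition, as you planned, is the right choice.
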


\begin{corollary}\label{str_nrec_ds} Strong nontrivial doppelsemigroups are nonrectangular.
\end{corollary}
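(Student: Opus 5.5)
The statement is the contrapositive of one direction of Proposition~\ref{strong_rect}, so the plan is to argue by contradiction. Let $(D,\dashv,\vdash)$ be a strong nontrivial doppelsemigroup and suppose it were rectangular. Proposition~\ref{strong_rect} says a rectangular doppelsemigroup is strong only if it is trivial, so $(D,\dashv,\vdash)$ would be trivial. This contradicts the hypothesis, hence $(D,\dashv,\vdash)$ is nonrectangular. Equivalently, the statement and Corollary~\ref{rec_nstr_ds} are both restatements of the implication ``rectangular and strong $\Rightarrow$ trivial''.

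Because Proposition~\ref{strong_rect} relies on the block decomposition of~\cite{Nagy}, I would also record a short direct check that uses only the axioms, so the corollary does not depend on that representation. Assume $(D,\dashv,\vdash)$ is rectangular and strong, and fix $x,y\in D$.

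\emph{Step 1.} Rectangularity of $\dashv$ with middle factor $y\vdash y$ gives $x\dashv y = x\dashv(y\vdash y)\dashv y$. The strong axiom rewrites this as $x\vdash(y\dashv y)\dashv y$. By $(D_2)$ and associativity this equals $x\vdash(y\dashv y\dashv y)$, and rectangularity of $\dashv$ reduces it to
$$x\dashv y = x\vdash(y\dashv y).$$

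\emph{Step 2.} Symmetrically, rectangularity of $\vdash$ with middle factor $y\dashv y$ gives $x\vdash y = x\vdash(y\dashv y)\vdash y$. The strong axiom rewrites this as $x\dashv(y\vdash y)\vdash y$. By $(D_1)$ this equals $x\dashv(y\vdash y\vdash y)$, and rectangularity of $\vdash$ reduces it to $x\dashv(y\vdash y)$. One more application of the strong axiom gives
$$x\vdash y = x\vdash(y\dashv y).$$

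\emph{Step 3.} Comparing the two displayed identities gives $x\dashv y = x\vdash y$ for all $x,y\in D$, so the doppelsemigroup is trivial.

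No step is a real obstacle. The only point needing care is the reassociation of mixed products in Steps 1 and 2. This is justified by $(D_1)$, $(D_2)$ and the remark in the introduction that rearranging parentheses in mixed expressions does not change the value.
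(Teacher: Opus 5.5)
Your first paragraph is exactly the paper's argument. The corollary is recorded without a separate proof, as the contrapositive of the direction ``rectangular and strong $\Rightarrow$ trivial'' of Proposition~\ref{strong_rect}.

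Your supplementary axiomatic check is also correct, and it is a genuinely different route to that direction. The paper works through Nagy's block decompositions $D=\bigsqcup_{t} S_t=\bigsqcup_{u} R_u$ and the maps $f_{t,r}$, $g_{u,v}$. You instead obtain $x\dashv y=x\vdash(y\dashv y)$ and $x\vdash y=x\dashv(y\vdash y)$ in a few lines from rectangularity, the strong axiom, $(D_1)$ and $(D_2)$, and finish with one more use of the strong axiom.

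Besides being self-contained, your version handles the one delicate point more cleanly. In the paper, the specialization $y=z$ gives $f_{t,r''}(x)=g_{u,v''}(x)$. Here $r''$ and $v''$ are the blocks of $y\vdash y$ and $y\dashv y$, not of $y$, and both are determined by the same $y$. So the passage to $f_{t,r}(x)=g_{u,v}(x)$ ``for all $r\in T$, $v\in U$'' is not justified as written. Read literally, it already fails for a two-element right zero semigroup regarded as a trivial doppelsemigroup.

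Your two identities are exactly the missing link. They take you from the diagonal instance $x\dashv(y\vdash y)=x\vdash(y\dashv y)$ of the strong axiom to $x\dashv y=x\vdash y$.
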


\section{Properties of doppelsemigroup isomorphisms}

A  map $\varphi : D_1 \to D_2$ is called a {\em homomorphism } from a doppelsemigroup $(D_1,\dashv_1, \vdash_1)$ to a doppelsemigroup $(D_2,\dashv_2, \vdash_2)$ if $$\varphi(a\dashv_1 b)=\varphi(a)\dashv_2\varphi(b)\ \ \text{  and  }\ \ \varphi(a\vdash_1 b)=\varphi(a)\vdash_2\varphi(b)$$ for all $a,b\in D_1$.

A bijective homomorphism $\psi : D_1 \to D_2$ is called an {\em isomorphism } from a doppelsemigroup $(D_1,\dashv_1, \vdash_1)$ to a doppelsemigroup $(D_2,\dashv_2, \vdash_2)$.
If there exists an isomorphism from a doppelsemigroup $(D_1,\dashv_1, \vdash_1)$ to a doppelsemigroup $(D_2,\dashv_2, \vdash_2)$, then $(D_1, \dashv_1, \vdash_1)$ and $(D_2, \dashv_2, \vdash_2)$ are said to be {\em isomorphic}, denoted $(D_1,\dashv_1, \vdash_1)\cong (D_2,\dashv_2, \vdash_2)$. An isomorphism $\psi: D\to D$ is called an {\em   automorphism} of a doppelsemigroup $(D,\dashv, \vdash)$. By $\Aut(D,\dashv, \vdash)$ we denote the automorphism group of a doppelsemigroup $(D,\dashv, \vdash)$. 

We define a doppelsemigroup $(D, \dashv, \vdash)$ to be {\em iso-opposite}, {\em iso-dual}, or {\em iso-cross-dual} if it is isomorphic to its opposite, dual, or cross-dual doppelsemigroup, respectively. Evidently, every trivial (self-opposite), commutative (self-dual), and abelian (self-cross-dual) doppelsemigroup is iso-opposite, iso-dual, and iso-cross-dual, respectively. However, the converse need not hold. Indeed, a doppelsemigroup may be iso-dual without being self-dual, since self-duality requires equality with the dual doppelsemigroup, whereas iso-duality requires only the existence of an isomorphism between them. Analogous remarks apply to the notions of self-opposite versus iso-opposite and self-cross-dual versus iso-cross-dual.

\begin{example} Consider any noncommutative group $(G, *)$ of even order and its variant $(G, *_a)$ generated by an element $a \in G$ of order $2$. Then $(G, *_a, *)$ forms a nontrivial noncommutative doppelsemigroup. Moreover, since 
$$a*_a*a= a*a*a = a^2*a = e*a =a \ne e = a*a,$$ the doppelsemigroup $(G, *_a, *)$ is nonabelian.

Let us show that the bijection $\psi: G \to G$ defined by $\psi(g) = a*g$ from the doppelsemigroup $(G, *_a, *)$ to its opposite dopplesemigroup $(G, *, *_a)$ is an isomorphism. Indeed,
\[
\begin{aligned}
& \psi(x*_a y) =  \psi(x*a*y) = a*x*a*y = \psi(x)*\psi(y)\ \text{ and } \\
& \psi(x*y) =  a*x*y  = a*x*a^2*y = (a*x)*a*(a*y) = \psi(x)*_a\psi(y).
\end{aligned}
\] 
Hence, $(G, *_a,*)$ is a nontrivial iso-opposite doppelsemigroup.

\medskip

Let us verify that the bijection $\phi: G \to G$ defined by $\phi(g) = g^{-1}$ from the doppelsemigroup $(G, *_a, *)$ to its dual dopplesemigroup $(G,  *_a^d, *^d)$ is an isomorphism. Indeed,

\[
\begin{aligned}
 \phi(x*_a y) &= \phi(x*a*y) = (x*a*y)^{-1} = y^{-1}*a^{-1}*x^{-1}  \\  
&= \phi(y)*a*\phi(x)=  \phi(y)*_a\phi(x) =  \phi(x)*_a^d\phi(y) \ \text{ and } \\
 \phi(x*y) &= (x*y)^{-1}  =y^{-1}*x^{-1} = \phi(y)*\phi(x) =  \phi(x)*^d\phi(y).
\end{aligned}
\] 
Therefore, $(G, *_a, *)$ is a noncommutative iso-dual doppelsemigroup.

\medskip

Finally, let us verify that the bijection $\varphi: G \to G$ defined by  $\varphi(g) = a*g^{-1}$ from the doppelsemigroup $(G, *_a, *)$ to its cross-dual dopplesemigroup $(G, *^d, *_a^d)$ is an isomorphism. Indeed,
\[
\begin{aligned}
\varphi(x*_a y) &= \varphi(x*a*y) = a*(x*a*y)^{-1} = a*y^{-1}*a^{-1}*x^{-1} \\
& = (a*y^{-1})*(a*x^{-1})= \varphi(y)*\varphi(x)= \varphi(x)*^d\varphi(y)\ \text{ and } \\
\varphi(x*y) &= a*(x*y)^{-1}  = a*y^{-1}*x^{-1} = a*y^{-1}*a^2*x^{-1} \\
& = (a*y^{-1})*a*(a*x^{-1})= \varphi(y)*a*\varphi(x)=\varphi(y)*_a\varphi(x) \\
& = \varphi(x)*_a^d\varphi(y).
\end{aligned}
\] 
Consequently, $(G, *_a, *)$ is a nonabelian iso-cross-dual doppelsemigroup.
\end{example}


\begin{proposition}\label{homo_ds_dual} Let $(D_1,\dashv_1, \vdash_1)$ and  $(D_2,\dashv_2, \vdash_2)$ be doppelsemigroups. Then for a map $\varphi: D_1 \to D_2$ the following conditions are equivalent:
\begin{itemize}
\item[1)] $\varphi$ is a homomorphism from $(D_1,\dashv_1, \vdash_1)$ to $(D_2,\dashv_2, \vdash_2)$;
\item[2)] $\varphi$ is a homomorphism from $(D_1,\dashv_1, \vdash_1)^{op}$ to $(D_2,\dashv_2, \vdash_2)^{op}$;
\item[3)] $\varphi$ is a homomorphism from $(D_1,\dashv_1, \vdash_1)^{d}$ to $(D_2,\dashv_2, \vdash_2)^{d}$;
\item[4)] $\varphi$ is a homomorphism from $(D_1,\dashv_1, \vdash_1)^{cd}$ to $(D_2,\dashv_2, \vdash_2)^{cd}$.
\end{itemize}
\end{proposition}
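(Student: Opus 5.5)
The plan is to unfold each condition into a pair of identities for $\varphi$ and compare. By definition, condition 1) says
\[
\varphi(a\dashv_1 b)=\varphi(a)\dashv_2\varphi(b)\quad\text{and}\quad \varphi(a\vdash_1 b)=\varphi(a)\vdash_2\varphi(b)\qquad (a,b\in D_1).
\]
The key observation is that being a homomorphism is a conjunction of two conditions, one for each operation. The three dualities only permute these two operations or replace each by its dual. So it suffices to check that each single condition is preserved.

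First, 1) $\Leftrightarrow$ 2) is immediate. The opposite doppelsemigroups $(D_i,\vdash_i,\dashv_i)$ carry the same two operations in the other order, and a homomorphism of them must respect $\vdash_1\mapsto\vdash_2$ and $\dashv_1\mapsto\dashv_2$. These are exactly the same two identities as in 1), listed in reverse order.

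Next, for 1) $\Leftrightarrow$ 3) I would prove a one-operation lemma. A map $\varphi$ satisfies $\varphi(a*_1 b)=\varphi(a)*_2\varphi(b)$ for all $a,b$ if and only if $\varphi(a*_1^d b)=\varphi(a)*_2^d\varphi(b)$ for all $a,b$. Indeed, the second identity reads $\varphi(b*_1 a)=\varphi(b)*_2\varphi(a)$, and since it is quantified over all pairs, swapping the names of $a$ and $b$ turns it into the first. Applying this lemma to $*=\dashv$ and to $*=\vdash$ gives 1) $\Leftrightarrow$ 3).

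Finally, for 3) $\Leftrightarrow$ 4) I use that the cross-dual $(D_i,\vdash_i^d,\dashv_i^d)$ is the opposite of the dual $(D_i,\dashv_i^d,\vdash_i^d)$. The equivalence 1) $\Leftrightarrow$ 2), applied to the dual doppelsemigroups, then yields 3) $\Leftrightarrow$ 4). Chaining these equivalences proves the proposition.

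There is no genuine obstacle here. The only point requiring care is noting that the universal quantification over all pairs $(a,b)$ is what makes the variable swap in the lemma legitimate.
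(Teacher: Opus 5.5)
Your proposal is correct and essentially matches the paper's proof. The paper arranges the argument as the cycle $1)\Rightarrow 2)\Rightarrow 3)\Rightarrow 4)\Rightarrow 1)$ rather than as biconditionals, but it uses the same two ingredients: the variable swap for the dual, and the observation that the cross-dual is the opposite of the dual.
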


\begin{proof}  The implication $(1)\Rightarrow(2)$ is straightforward. 

\medskip

$(2)\Rightarrow(3)$ Let $\varphi$ be a homomorphism from $(D_1,\dashv_1, \vdash_1)^{op} = (D_1,\vdash_1, \dashv_1)$ to $(D_2,\dashv_2, \vdash_2)^{op}= (D_2,\vdash_2, \dashv_2)$.
Observing that
\[
\begin{aligned}
&\varphi(x\dashv_1^d y) = \varphi(y\dashv_1 x) = \varphi(y)\dashv_2 \varphi(x) = \varphi(x) \dashv_2^d \varphi(y) \text{ and} \\
&\varphi(x\vdash_1^d y) = \varphi(y\vdash_1 x) = \varphi(y)\vdash_2 \varphi(x) = \varphi(x) \vdash_2^d \varphi(y),
\end{aligned}
\]
for all $x,y\in D_1$, we deduce that $\varphi$ is a homomorphism from $(D_1,\dashv_1, \vdash_1)^{d}$ to $(D_2,\dashv_2, \vdash_2)^{d}$.

\medskip
Since $(D_1, \dashv_1, \vdash_1)^{cd}$ is the opposite of $(D_1, \dashv_1, \vdash_1)^d$ and $(D_2, \dashv_2, \vdash_2)^{cd}$ is the opposite of $(D_2, \dashv_2, \vdash_2)^d$, the implication $(3) \Rightarrow (4)$ is immediate.

\medskip
$(4)\Rightarrow(1)$ Let $\varphi$ be a homomorphism from $(D_1,\dashv_1, \vdash_1)^{cd} = (D_1,\vdash_1^d, \dashv_1^d)$ to $(D_2,\dashv_2, \vdash_2)^{cd}= (D_2,\vdash_2^d, \dashv_2^d)$.
Noting that
\[
\begin{aligned}
&\varphi(x\dashv_1 y) = \varphi(y\dashv_1^d x) = \varphi(y)\dashv_2^d \varphi(x) = \varphi(x) \dashv_2 \varphi(y) \text{ and} \\
&\varphi(x\vdash_1 y) = \varphi(y\vdash_1^d x) = \varphi(y)\vdash_2^d \varphi(x) = \varphi(x) \vdash_2 \varphi(y),
\end{aligned}
\]
for all $x,y\in D_1$, we conclude that $\varphi$ is a homomorphism from $(D_1,\dashv_1, \vdash_1)$ to $(D_2,\dashv_2, \vdash_2)$.
\end{proof}

\begin{corollary}\label{iso_ds_dual} Let $(D_1,\dashv_1, \vdash_1)$ and  $(D_2,\dashv_2, \vdash_2)$ be doppelsemigroups. Then the following conditions are equivalent:
\begin{itemize}
\item[1)] the doppelsemigroups $(D_1,\dashv_1, \vdash_1)$ and $(D_2,\dashv_2, \vdash_2)$ are isomorphic;
\item[2)] the doppelsemigroups $(D_1,\dashv_1, \vdash_1)^{op}$ and  $(D_2,\dashv_2, \vdash_2)^{op}$ are isomorphic;
\item[3)] the doppelsemigroups $(D_1,\dashv_1, \vdash_1)^{d}$ and  $(D_2,\dashv_2, \vdash_2)^{d}$ are isomorphic;
\item[4)] the doppelsemigroups $(D_1,\dashv_1, \vdash_1)^{cd}$ and  $(D_2,\dashv_2, \vdash_2)^{cd}$ are isomorphic.
\end{itemize}
\end{corollary}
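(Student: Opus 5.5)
The plan is to apply Proposition~\ref{homo_ds_dual} to a bijection and its inverse. An isomorphism is a bijective homomorphism, and the equivalences in Proposition~\ref{homo_ds_dual} hold for an arbitrary map $\varphi: D_1 \to D_2$. Bijectivity is a property of the underlying map and does not depend on the operations. So for a fixed bijection $\psi: D_1\to D_2$, the following are equivalent: $\psi$ is an isomorphism $(D_1,\dashv_1,\vdash_1)\to(D_2,\dashv_2,\vdash_2)$; $\psi$ is an isomorphism between their opposites; between their duals; between their cross-duals. In other words, each of the four structures is built from the same underlying sets, and Proposition~\ref{homo_ds_dual} applied to $\psi$ shows the homomorphism property transfers verbatim.

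Concretely, I would argue $(1)\Leftrightarrow(k)$ for $k=2,3,4$ as follows. For $(1)\Rightarrow(k)$, take an isomorphism $\psi$ between the original doppelsemigroups. By Proposition~\ref{homo_ds_dual}, $\psi$ is a homomorphism between the $k$-th transformed structures, and it is still bijective, hence an isomorphism. For $(k)\Rightarrow(1)$, take an isomorphism $\psi$ between the transformed structures. Proposition~\ref{homo_ds_dual} is stated for arbitrary doppelsemigroups, so it can be read with the transformed doppelsemigroups in the role of $(D_i,\dashv_i,\vdash_i)$. Alternatively, one uses directly the implication from condition~$(k)$ to condition~$(1)$ in Proposition~\ref{homo_ds_dual}, which is available because all four conditions there are equivalent. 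Either way, $\psi$ is a homomorphism of the original structures and is bijective.

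No real obstacle is expected. The only point needing care is that Proposition~\ref{homo_ds_dual} is an equivalence for \emph{every} map, so both directions are available without invoking involutivity. If one prefers, involutivity of $op$, $d$, $cd$ (noted in Section~2) gives the reverse direction as well, by applying the forward direction to the transformed doppelsemigroups.
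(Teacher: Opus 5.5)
Your proposal is correct and matches the paper's intended argument: the paper gives no separate proof, treating the corollary as immediate from Proposition~\ref{homo_ds_dual}. Applying that proposition to a single bijection $\psi$, whose bijectivity does not depend on the operations, is exactly that derivation.
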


\begin{corollary}\label{aut_ds_dual} Let $(D,\dashv, \vdash)$ be a doppelsemigroup. Then 
$$\Aut(D,\dashv, \vdash) = \Aut(D,\dashv, \vdash)^{op} = \Aut(D,\dashv, \vdash)^{d} = \Aut(D,\dashv, \vdash)^{cd}.$$ 
\end{corollary}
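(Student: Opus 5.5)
The corollary follows by specializing Corollary~\ref{iso_ds_dual}, or more directly Proposition~\ref{homo_ds_dual}, to $D_1=D_2=D$ with the same pair of operations on both sides. First I will note that all four structures $(D,\dashv,\vdash)$, $(D,\dashv,\vdash)^{op}$, $(D,\dashv,\vdash)^{d}$ and $(D,\dashv,\vdash)^{cd}$ have the same underlying set $D$. Hence each automorphism group is a subgroup of the symmetric group on $D$, so the claimed equalities are equalities of sets of bijections, not merely isomorphisms of abstract groups.

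Next, fix a bijection $\psi\colon D\to D$. I apply Proposition~\ref{homo_ds_dual} with $(D_1,\dashv_1,\vdash_1)=(D_2,\dashv_2,\vdash_2)=(D,\dashv,\vdash)$ and $\varphi=\psi$. The following are then equivalent:
\begin{itemize}
\item[(a)] $\psi$ is an endomorphism of $(D,\dashv,\vdash)$;
\item[(b)] $\psi$ is an endomorphism of $(D,\dashv,\vdash)^{op}$;
\item[(c)] $\psi$ is an endomorphism of $(D,\dashv,\vdash)^{d}$;
\item[(d)] $\psi$ is an endomorphism of $(D,\dashv,\vdash)^{cd}$.
\end{itemize}
Since an automorphism is exactly a bijective endomorphism, and bijectivity of $\psi$ does not depend on the operations, $\psi$ lies in one of the four automorphism groups if and only if it lies in all of them. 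This gives the stated chain of equalities.

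There is no real obstacle. The only point needing care is to use the homomorphism-level statement, Proposition~\ref{homo_ds_dual}, rather than just the existence of isomorphisms in Corollary~\ref{iso_ds_dual}. The latter only says that some isomorphism exists, whereas we need the same map $\psi$ to be an automorphism of each structure. The group operation, composition of maps, is the same in all four cases, so the groups coincide and are not merely equal as sets.
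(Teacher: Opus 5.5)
Your proof is correct and matches the paper's approach. The paper gives no written proof and presents this as an immediate consequence of Proposition~\ref{homo_ds_dual} (through Corollary~\ref{iso_ds_dual}); specializing that proposition to $D_1=D_2=D$ and a fixed bijection $\psi$, as you do, is exactly that argument, and you are right that the map-level statement, not the mere existence of isomorphisms, is what gives equality of the groups as sets of permutations of $D$.
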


The following theorem was proved in~\cite{GDS_die}.

\begin{theorem}\label{char_iso_com_rec_ds} Let $\kappa$ be a cardinal, and let $D$ be a set with $|D| = \kappa$.
Up to isomorphism, there exists exactly one commutative rectangular doppelsemigroup of order $\kappa$, namely the trivial null doppelsemigroup $O_{D^0}$, whose automorphism group is $S_{D \setminus \{0\}}$.
\end{theorem}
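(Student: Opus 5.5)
We have to prove Theorem~\ref{char_iso_com_rec_ds}: a commutative rectangular doppelsemigroup on a set $D$ of cardinality $\kappa$ is, up to isomorphism, exactly the trivial null doppelsemigroup, and its automorphism group is $S_{D\setminus\{0\}}$. The plan is to first classify commutative rectangular semigroups, and then to use the doppelsemigroup axioms to force the two operations to coincide.

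\emph{Step 1: semigroup level.} Let $(S,*)$ be commutative and rectangular. For any $x,y,a,b\in S$ we have
\[
x*y=x*a*y=a*x*y,
\]
using rectangularity and commutativity, and then
\[
a*x*y=a*y=a*b*y=y*a*b=y*b=b*y=\dots
\]
The aim is to show that all products are equal. Rectangularity gives $x*y=x*b*y$. Commutativity turns this into $b*x*y$, which by rectangularity equals $b*y$. Applying the same argument on the other side gives $b*y=b*a*y=a*b*y$, hence $a*y$, and finally $a*b$. So $x*y=a*b$ for all $x,y,a,b$: there is a single product value $0$, and $0*x=0=x*0$. Hence $(S,*)$ is the null semigroup $O_{S^0}$. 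The only care needed here is to chain the rewrites correctly; I expect this to be routine.

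\emph{Step 2: the two operations coincide.} Now let $(D,\dashv,\vdash)$ be commutative and rectangular. By Step~1, $x\dashv y=0_1$ and $x\vdash y=0_2$ for all $x,y$, for some constants $0_1,0_2\in D$. If $D=\emptyset$ the statement is vacuous, so take any $x\in D$. Axiom $(D_1)$ with $x=y=z$ gives
\[
0_2=(x\dashv x)\vdash x=x\dashv(x\vdash x)=0_1 .
\]
So the operations coincide and the doppelsemigroup is the trivial null doppelsemigroup. Conversely, this structure is clearly a commutative rectangular doppelsemigroup. Two such structures on sets of equal cardinality are isomorphic: take any bijection sending zero to zero, as in the remark on $O_{S^0}\cong O_{T^z}$.

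\emph{Step 3: automorphisms.} An automorphism $\psi$ must satisfy $\psi(0)=\psi(0\dashv 0)=\psi(0)\dashv\psi(0)=0$, so it fixes $0$. Conversely, any bijection fixing $0$ preserves the constant products. Therefore $\Aut(D,\dashv,\vdash)=S_{D\setminus\{0\}}$.

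The only step with any real content is Step~1; everything else is immediate.
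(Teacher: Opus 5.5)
Your proof is correct and complete. This paper does not prove the theorem itself; it quotes it from \cite{GDS_die}, so there is no in-paper argument to match.

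Within the paper's own framework, triviality could instead be obtained by combining Proposition~\ref{com_strong} (commutative implies strong) with Proposition~\ref{strong_rect} (a strong rectangular doppelsemigroup is trivial). Your Step~1 would still be needed afterwards, to identify the common operation as a null semigroup. Your route is more economical. Once Step~1 shows that each operation is constant, one instance of $(D_1)$, namely $0_2=(x\dashv x)\vdash x=x\dashv(x\vdash x)=0_1$, forces the two zeros to coincide. This avoids the representation theorem for rectangular semigroups on which Proposition~\ref{strong_rect} relies. Steps~2 and~3 (isomorphism via any zero-preserving bijection, and $\Aut=S_{D\setminus\{0\}}$ because $\psi(0)=\psi(0\dashv 0)=\psi(0)\dashv\psi(0)=0$) are fine as written.

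Two small tidy-ups:

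\emph{Step~1.} The final link, \emph{and finally $a*b$}, is asserted rather than derived, and the first displayed chain is redundant. A cleaner version: $x*y=x*b*y=b*x*y=b*y$ shows the product depends only on the right factor. Commutativity then shows it depends only on the left factor too, so it is constant; explicitly, $a*y=y*a=b*a=a*b$.

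\emph{The case $\kappa=0$.} It is not vacuous: the empty structure is a commutative rectangular doppelsemigroup, but it has no zero. The theorem should therefore be read with $\kappa\ge 1$, as the notation $O_{D^0}$ already presupposes.
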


\section{Refining the complete classification of nonisomorphic doppelsemigroups of order 3}\label{sec:3iso_clas}

In carrying out the classification, up to isomorphism, of all doppelsemigroups of order $3$ in \cite{GR1}, the nontrivial rectangular doppelsemigroups were overlooked.

\medskip

Let $R = \{a, b, c\}$. We define two associative binary operations, $\dashv$ and $\vdash$, on $R$ using the following Cayley tables:

$$
\begin{array}{||c||c|c|c||}
\hhline{|t:=:t:===:t|}
\dashv & a & b & c \\
\hhline{|:=::===:|}
a & a & a & a \\
\hhline{||-||---||}
b & b & b & b \\
\hhline{||-||---||}
c & a & a & a \\
\hhline{|b:=:b:===:b|}
\end{array}
\hspace{3cm}
\begin{array}{||c||c|c|c||}
\hhline{|t:=:t:===:t|}
\vdash & a & b & c \\
\hhline{|:=::===:|}
a & a & a & a \\
\hhline{||-||---||}
b & b & b & b \\
\hhline{||-||---||}
c & b & b & b \\
\hhline{|b:=:b:===:b|}
\end{array}
$$

\medskip

It is straightforward to verify that $(R, \dashv, \vdash)$ is a nontrivial noncommutative rectangular doppelsemigroup.

\smallskip
 
Consider the map $\psi: R \to R$ defined by $\psi(a) = b$, $\psi(b) = a$, and $\psi(c) = c$. One can easily verify that $\psi$ is an isomorphism from the doppelsemigroup $(R, \dashv, \vdash)$ to its opposite doppelsemigroup $(R, \dashv, \vdash)^{op} = (R, \vdash, \dashv)$. Consequently, $(R, \dashv, \vdash)$ is a nontrivial iso-opposite doppelsemigroup.

\smallskip

Because isomorphisms preserve left zero subdoppelsemigroups, and  $(R, \dashv, \vdash)$ has two left zeros whereas its dual $(R, \dashv, \vdash)^d$ has none (as it contains, by duality, two right zeros), the doppelsemigroups $(R, \dashv, \vdash)$ and  $(R, \dashv, \vdash)^d$ cannot be isomorphic. Since $(R, \dashv, \vdash)$ is isomorphic to its opposite  $(R, \dashv, \vdash)^{op}$, Corollary~\ref{iso_ds_dual} implies that its dual $(R, \dashv, \vdash)^d$ is likewise isomorphic to  $((R, \dashv, \vdash)^{op})^d = (R, \dashv, \vdash)^{cd}$. By Propositions~\ref{ds_un_com} and~\ref{ds_un_rec}, the dual structure  $(R, \dashv, \vdash)^d$ is a noncommutative rectangular doppelsemigroup. Moreover, Corollary~\ref{rec_nstr_ds} shows that neither $(R, \dashv, \vdash)$ nor its dual is strong. Additionally, Corollary~\ref{iso_ds_dual} ensures that $(R, \dashv, \vdash)^d$ is a nontrivial iso-opposite doppelsemigroup.

\smallskip

Taking into account the doppelsemigroups $(R, \dashv, \vdash)$ and $(R, \dashv, \vdash)^d$, together with the results from \cite{GR1} and the computer-aided calculations presented in Section~\ref{sec:ds-finite}, the following theorem provides a complete classification of all nonisomorphic doppelsemigroups of order $3$.

\begin{theorem}
Up to isomorphism, there exist $77$ pairwise non-isomorphic three-element doppelsemigroups, $41$ of which are commutative. Noncommutative doppelsemigroups form $18$ pairs of dual doppelsemigroups. Moreover, there are $65$ strong doppelsemigroups of order $3$. Furthermore, there are $7$ rectangular doppelsemigroups of order $3$, including one commutative trivial doppelsemigroup, two pairs of  noncommutative trivial dual doppelsemigroups and a single pair of noncommutative  iso-opposite nontrivial dual doppelsemigroups. Additionally, there exist exactly $24$ pairwise nonisomorphic three-element trivial doppelsemigroups.
\end{theorem}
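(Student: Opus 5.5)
The plan is to combine the earlier enumeration of \cite{GR1} with the structural results of the preceding sections, and to correct only what \cite{GR1} missed: the nontrivial rectangular doppelsemigroups. First I will recall from \cite{GR1} the list of $75$ pairwise nonisomorphic three-element doppelsemigroups, of which $41$ are commutative and $65$ strong. The new objects are $(R,\dashv,\vdash)$ and its dual $(R,\dashv,\vdash)^d$. We already showed that they are not isomorphic to each other (left zeros versus right zeros). By Corollary~\ref{rec_nstr_ds} they are nonstrong, and by Proposition~\ref{ds_un_com} they are noncommutative. Since they are nontrivial and rectangular, they cannot coincide with any of the strong or commutative entries of the old list. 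That they also differ from the nonstrong entries of \cite{GR1} will follow from the claim (to be justified next) that the omission in \cite{GR1} concerned exactly the nontrivial rectangular ones. This gives $75+2=77$ classes, still $41$ commutative and $65$ strong.

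For the dual-pair count I will use Proposition~\ref{ds_com} and its corollary: noncommutative doppelsemigroups fall into dual pairs. Corollary~\ref{iso_ds_dual} ensures that duality is well defined on isomorphism classes. A noncommutative class could still be iso-dual, so I must check that none of the $77-41=36$ noncommutative classes is isomorphic to its dual. Granting that, they form $36/2=18$ pairs.

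For the rectangular part I will split into trivial and nontrivial. Trivial rectangular doppelsemigroups of order $3$ are rectangular semigroups of order $3$ up to isomorphism. Using the Nagy description recalled in the proof of Proposition~\ref{strong_rect}, or a direct check, these are five:
\begin{itemize}
\item $O_3$, the unique commutative one by Theorem~\ref{char_iso_com_rec_ds};
\item $LO_3$ and $RO_3$;
\item the null extension with two left zeros, and its dual.
\end{itemize}
The last four give the two noncommutative dual pairs. For the nontrivial ones I will argue that any nontrivial rectangular doppelsemigroup on three points is isomorphic to $(R,\dashv,\vdash)$ or its dual. Both operations range over the five rectangular semigroup structures on $\{a,b,c\}$ in all labelings, subject to $(D_1)$ and $(D_2)$. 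For rectangular operations, $(D_1)$ reads $x\dashv y\vdash z$ with only the outer letters mattering in each operation. Finally, the $24$ trivial doppelsemigroups are just the $24$ semigroups of order $3$ up to isomorphism. This is the classical count, since semigroups isomorphic as doppelsemigroups $(S,*,*)$ are exactly isomorphic semigroups.

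The main obstacle is the exhaustiveness claims: that there are no further nontrivial rectangular examples, and that no noncommutative class is iso-dual. Both are most reliably settled by the exhaustive computer search of Section~\ref{sec:ds-finite}. That search enumerates all pairs of associative tables on three points satisfying $(D_1)$ and $(D_2)$ and reduces them modulo $S_3$. I would cite it for these finite verifications and give the hand argument above as a consistency check.
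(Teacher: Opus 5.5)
Your proposal is correct and follows essentially the same route as the paper. Both take the $75$ classes of \cite{GR1}, adjoin $(R,\dashv,\vdash)$ and its dual (nonstrong, noncommutative, iso-opposite and mutually non-isomorphic, as shown just before the theorem), and rely on the exhaustive computer search of Section~\ref{sec:ds-finite} for the exhaustiveness claims. Your remarks that no noncommutative class may be iso-dual before halving $36$, and your explicit list of the five rectangular semigroups of order~$3$, spell out points the paper leaves implicit in that computation.
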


\section{Number of doppelsemigroups of small order}\label{sec:ds-finite}

The determination of the numbers $\mathrm{s}(n)$ and $\mathrm{cs}(n)$, representing respectively the counts of all pairwise nonisomorphic semigroups of order 
$n$ and all pairwise nonisomorphic commutative semigroups of order $n$, is a difficult combinatorial problem. The functions $\mathrm{s}(n)$ and $\mathrm{cs}(n)$
grow very rapidly as $n$ tends to infinity. The sequences $(\mathrm{s}(n))$ and $(\mathrm{cs}(n))$ are listed in the On-Line Encyclopedia of Integer Sequences as entries A027851 and A001426, respectively. All currently known exact values of these sequences are presented in Tables~\ref{tab:sg} and~\ref{tab:csg}.

\medskip

\begin{table}[H]
\caption{Number of nonisomorphic semigroups up to order $9$}\label{tab:sg}
\medskip
    \centering
    \resizebox{13cm}{!}{
        \begin{tabular}{|r|c|c|c|c|c|c|c|c|c|c|c|c|}
            \hline
            $n$\ \ & 0 & 1  & 2 & 3 &   4 & 5  & 6 & 7 & 8 & 9  \\
            \cline{1-11}
            $\mathrm{s}(n)$ & 1 & 1 & 5 & 24 & 188 & 1915 & 28634 & 1627672 & 3684030417 & 105978177936292  \\
            \cline{1-11}
         \end{tabular}
    } 
\end{table}

\begin{table}[H]
\caption{Number of nonisomorphic commutative semigroups up to order $10$}\label{tab:csg}
\medskip
    \centering
    \resizebox{12.5cm}{!}{
        \begin{tabular}{|r|c|c|c|c|c|c|c|c|c|c|c|c|}
            \hline
            $n$\ \ & 0 & 1  & 2 & 3 &   4 & 5  & 6 & 7 & 8 & 9 & 10 \\
            \cline{1-12}
            $\mathrm{cs}(n)$ & 1 & 1 & 3 & 12 & 58 & 325 & 2143 & 17291 & 221805 & 11545843 & 3518930337 \\
            \cline{1-12}
         \end{tabular}
    }

\end{table}

Denote by $\mathrm{ds}(n)$, $\mathrm{cds}(n)$, $\mathrm{ads}(n)$, $\mathrm{sds}(n)$, and $\mathrm{rds}(n)$ the number of all pairwise non-isomorphic doppelsemigroups, commutative doppelsemigroups, abelian doppelsemigroups, strong doppelsemigroups, and rectangular doppelsemigroups of order $n$, respectively. We were able to calculate these cardinalities for small $n$. In Appendix~\ref{appnd}, we explain the method used to generate all pairwise nonisomorphic doppelsemigroups of order $n$ and provide a listing of the \texttt{Python} code employed for these computations. The results of (computer) calculations are presented in Tables~\ref{tab:ds}--\ref{tab:rds}.

\begin{table}[H]
\caption{Number of  nonisomorphic doppelsemigroups up to order 5}\label{tab:ds}
\medskip
    \centering
    \resizebox{6cm}{!}{
        \begin{tabular}{|r|c|c|c|c|c|c|c|}
            \hline
            $n$\ \ & 0 & 1  & 2 & 3 &   4 & 5   \\
            \cline{1-7}
            $\mathrm{ds}(n)$ & 1 & 1 & 8 & 77 & 1217 & 68177  \\
            \cline{1-7}
         \end{tabular}
    }
\end{table}

\begin{table}[H]
\caption{ Number of  nonisomorphic commutative doppelsemigroups up to order 6}\label{tab:cds}
\medskip
    \centering
    \resizebox{7.5cm}{!}{
        \begin{tabular}{|r|c|c|c|c|c|c|c|c|}
            \hline
            $n$\ \ & 0 & 1  & 2 & 3 &   4 & 5 & 6  \\
            \cline{1-8}
            $\mathrm{cds}(n)$ & 1 & 1 & 6 & 41 & 345 & 3892 &  134462 \\
            \cline{1-8}
        \end{tabular}
    }
\end{table}

Since a doppelsemigroup is considered trivial when its two operations coin\-cide, the numbers of all pairwise nonisomorphic nontrivial doppelsemigroups and  pairwise nonisomorphic  commutative nontrivial doppelsemigroups of order $n$ are equal to $\mathrm{ds}(n)\!-\!\mathrm{s}(n)$ and  $\mathrm{cds}(n)\!-\!\mathrm{cs}(n)$, respectively.

\begin{table}[H]
\caption{ Number of  nonisomorphic abelian doppelsemigroups up to order 6}\label{tab:ads}
\medskip
    \centering
    \resizebox{6.5cm}{!}{
        \begin{tabular}{|r|c|c|c|c|c|c|c|c|}
            \hline
            $n$\ \ & 0 & 1  & 2 & 3 &   4 & 5 & 6  \\
            \cline{1-8}
            $\mathrm{ads}(n)$ & 1 & 1 & 3 & 12 & 62 & 446 & 7503 \\
            \cline{1-8}
        \end{tabular}
    }
    
\end{table}

Since a commutative doppelsemigroup is abelian if and only if it is trivial, the number of all pairwise nonisomorphic trivial abelian doppelsemigroups of order $n$ is equal to $\mathrm{cs}(n)$, and hence the numbers of all pair\-wise nonisomorphic nonabelian commutative doppelsemigroups and pairwise nonisomorphic noncommutative abelian doppelsemigroups of order~$n$ are $\mathrm{cds}(n)\!-\!\mathrm{cs}(n)$ and $\mathrm{ads}(n)\!-\!\mathrm{cs}(n)$, respectively.

\begin{table}[H]
\caption{Number of  nonisomorphic strong doppelsemigroups up to order 5}\label{tab:sds}
\medskip
    \centering
    \resizebox{6cm}{!}{
        \begin{tabular}{|r|c|c|c|c|c|c|c|}
            \hline
            $n$\ \ & 0 & 1  & 2 & 3 &   4 & 5   \\
            \cline{1-7}
            $\mathrm{sds}(n)$ & 1 & 1 & 8 & 65 & 841 & 56428  \\
            \cline{1-7}
         \end{tabular}
    }
    
\end{table}

Proposition~\ref{com_strong} implies that the number of all pairwise nonisomorphic noncommutative strong doppelsemigroups  of order~$n$ is equal to $\mathrm{sds}(n)\!-\!\mathrm{cds}(n)$.

\begin{table}[H]
\caption{ Number of  nonisomorphic rectangular doppelsemigroups and semigroups up to order 6}\label{tab:rds}
\medskip
    \centering
    \resizebox{6cm}{!}{
        \begin{tabular}{|r|c|c|c|c|c|c|c|}
            \hline
            $n$\ \ & 0 & 1  & 2 & 3 &   4 & 5 & 6  \\
            \cline{1-8}
            $\mathrm{rds}(n)$ & 1 & 1 & 3 & 7 & 20 & 51 & 171 \\
            \cline{1-8}
            $\mathrm{rs}(n)$ & 1 & 1 & 3 & 5 & 10 & 14 & 27 \\
            \cline{1-8}
        \end{tabular}
    }
    
\end{table}

Table~\ref{tab:rds} also lists the numbers $\mathrm{rs}(n)$ of all pairwise non\-isomorphic rectangular semigroups for $n \leq 6$. Hence, the number of pairwise non\-isomorphic rectangular nontrivial doppelsemigroups of order~$n$ is given by $\mathrm{rds}(n)\!- \!\mathrm{rs}(n)$.

According to Theorem~\ref{char_iso_com_rec_ds}, the numbers of pairwise nonisomorphic noncommutative rectangular doppelsemigroups and pairwise nonisomorphic noncommutative rectangular nontrivial doppelsemigroups of order~$n$ are equal to $\mathrm{rds}(n)\!-\!1$ and $\mathrm{rds}(n)\!-\!\mathrm{rs}(n)$, respectively.

Since a rectangular doppelsemigroup is strong if and only if it is trivial, by Proposition~\ref{strong_rect}, the number of pairwise nonisomorphic nonrectangular strong doppelsemigroups of order~$n$ is $\mathrm{sds}(n)\!-\!\mathrm{rs}(n)$, while the number of pairwise nonisomorphic nonstrong rectangular doppelsemigroups of order~$n$ is $\mathrm{rds}(n)\!-\!\mathrm{rs}(n)$.

\begin{problem}Determine the numbers $\mathrm{ds}(n)$, $\mathrm{sds}(n)$  for $n\geq 6$ and $\mathrm{cds}(n)$, $\mathrm{ads}(n)$, $\mathrm{rds}(n)$ for $n\geq 7$.
\end{problem}


\appendix

\section{Program code for computing nonisomorphic dop\-pel\-semigroups}\label{appnd}

This code processes the Cayley tables of all nonisomorphic semigroups of order $n$ obtained with \texttt{GAP} using the \texttt{Smallsemi} library of semigroups of small order.  The corresponding  tables were relabeled from $\{1,\ldots,n\}$ to $\{0,\ldots,n\!-\!1\}$ for computational convenience and exported to \texttt{csv}-files. The code loads the Cayley tables from \texttt{csv}-files, generates all their permutations, and checks pairwise combinations for compliance with the system of doppelsemigroup axioms. It then eliminates isomorphic cases, retaining only representatives of isomorphism classes, and produces a complete list of pairwise nonisomorphic doppelsemigroups of a given order $n$. The results are saved as operation tables for $\dashv$ and $\vdash$ into a single \texttt{csv}-file for further analysis. For $n=6$, we translated this code from \texttt{Python} into \texttt{C++} and performed parallel computations. We present the \texttt{Python} version here, as it is more readable and accessible to the reader.

\begin{lstlisting}
import csv
import glob
import os
from itertools import permutations, product

# ----------------------------
# 1) Load semigroup tables
# ----------------------------
semigroup_tables = []
n = None  # size will be determined automatically

for filename in glob.glob("semigroup_*.csv"):
    with open(filename, newline='') as csvfile:
        reader = csv.reader(csvfile)
        table = []
        for row in reader:
            nums = [int(cell) for cell in row if cell.strip() != ""]
            if n is None:
                n = len(nums)  # determine size from the first row
            table.append(nums)
        semigroup_tables.append(table)

print(f"Loaded {len(semigroup_tables)} semigroup tables (order {n})")

# ----------------------------
# 2) Generate all permutations for each table
# ----------------------------
all_tables = []
for tab in semigroup_tables:
    for p in permutations(range(n)):
        inv = [0]*n
        for idx, val in enumerate(p):
            inv[val] = idx
        newtab = [[p[tab[inv[i]][inv[j]]] for j in range(n)] for i in range(n)]
        all_tables.append(newtab)

print(f"Total associative tables with permutations: {len(all_tables)}")

# ----------------------------
# 3) Check doppelsemigroup axioms
# ----------------------------
def is_doppelsemigroup(op_dashv, op_vdash):
	for x, y, z in product(range(n), repeat=3):
    	 if op_vdash[op_dashv[x][y]][z] != op_dashv[x][op_vdash[y][z]]: return False #D1
    	 if op_dashv[op_vdash[x][y]][z] != op_vdash[x][op_dashv[y][z]]: return False #D2
      	#if op_dashv[x][y] != op_dashv[y][x]: return False #CommL
      	#if op_vdash[x][y] != op_vdash[y][x]: return False #CommR
      	#if op_vdash[x][y] != op_dashv[y][x]: return False #Abelian
      	#if op_dashv[x][op_vdash[y][z]] != op_vdash[x][op_dashv[y][z]]: return False #Strong
      	#if op_dashv[x][op_dashv[y][z]] != op_dashv[x][z] or op_vdash[x][op_vdash[y][z]] != op_vdash[x][z]: return False #Rectangular
    return True

# ----------------------------
# 4) Find all doppelsemigroups
# ----------------------------
doppelsemigroup_pairs = []
for t1 in all_tables:
    for t2 in all_tables:
        if is_doppelsemigroup(t1, t2):
            doppelsemigroup_pairs.append((t1, t2))

print(f"Found {len(doppelsemigroup_pairs)} ordered doppelsemigroup pairs")

# ----------------------------
# 5) Canonization under permutations
# ----------------------------
canon_seen = set()
rep_pairs = []  # list of representatives to output
for t1, t2 in doppelsemigroup_pairs:
    keys = []
    for p in permutations(range(n)):
        inv = [0]*n
        for idx, val in enumerate(p):
            inv[val] = idx
        r1 = tuple(tuple(p[t1[inv[i]][inv[j]]] for j in range(n)) for i in range(n))
        r2 = tuple(tuple(p[t2[inv[i]][inv[j]]] for j in range(n)) for i in range(n))
        keys.append((r1, r2))
    min_key = min(keys)
    if min_key not in canon_seen:
        canon_seen.add(min_key)
        rep_pairs.append(min_key)

print(f"Number of nonisomorphic doppelsemigroups (order {n}): {len(canon_seen)}")

# ----------------------------
# 6) Save all nonisomorphic doppelsemigroups into one csv-file
# ----------------------------
out_dir = os.path.dirname(os.path.abspath(file))  # program directory
list_file = os.path.join(out_dir, f"list_gdm{n}.csv")

with open(list_file, "w", newline='', encoding="utf-8") as f:
    writer = csv.writer(f)
    for idx, (r1, r2) in enumerate(rep_pairs, start=1):
        writer.writerow([f"doppelsemigroup #{idx}"])
        writer.writerow(["Operation dashv"])
        writer.writerows(r1)
        writer.writerow([])  # empty line
        writer.writerow(["Operation vdash"])
        writer.writerows(r2)
        writer.writerow([])  # separator
        writer.writerow([])

print(f"\nAll nonisomorphic doppelsemigroups saved to {list_file}")
\end{lstlisting}

\end{document}